\documentclass{amsart}
\usepackage{amsmath}
\usepackage{dsfont}
  \usepackage{paralist}
  \usepackage{graphics} 
  \usepackage{epsfig} 
\usepackage{graphicx}  \usepackage{epstopdf}
 \usepackage[colorlinks=true]{hyperref}
 \usepackage{amsmath}
\usepackage{amsfonts}
\usepackage{amssymb}
\usepackage{ mathrsfs }
\usepackage{amsthm}
\usepackage[pagewise]{lineno}
\usepackage{enumitem}
\usepackage{graphicx,color,xcolor,tikz}
\usepackage{bm}
\usepackage{hyperref}
\hypersetup{
    colorlinks=true,                         
    linkcolor=blue, 
    citecolor=red, 
  } 
\hypersetup{urlcolor=blue, citecolor=red}

  \textheight=8.2 true in
   \textwidth=6.0 true in
    \topmargin 30pt
     \setcounter{page}{1}




\newtheorem{thm}{Theorem}
\newtheorem{prop}[thm]{Proposition}
\newtheorem{lem}[thm]{Lemma}

\newtheorem{defi}[thm]{Definition}
\newtheorem{rem}[thm]{Remark}


\def\I{\mathcal I}

\def\M{\mathcal M}

\def\R{\mathbb R}

\def\T{\mathcal T}

\def\e{\varepsilon}

\newcommand{\pt}{\partial}

\newcommand{\abs}[1]{\ensuremath{\left|#1\right|}}
\newcommand{\card}[1]{\left\lvert#1\right\rvert}
\newcommand{\norm}[1]{\ensuremath{\left\|#1\right\|}}

\makeatletter
\newcommand{\doublewidetilde}[1]{{%
  \mathpalette\double@widetilde{#1}%
}}
\newcommand{\double@widetilde}[2]{%
  \sbox\z@{$\m@th#1\widetilde{#2}$}%
  \ht\z@=.9\ht\z@
  \widetilde{\box\z@}%
}
\makeatother

\title[Part 2- Derivation of the macroscopic tridomain model]{Microscopic tridomain model of electrical activity in the heart with dynamical gap junctions. Part 2- Derivation of the macroscopic tridomain model by unfolding homogenization method}


\subjclass{65N55
, 35A01
, 35A02
, 35B27
, 35K57.
}
 \keywords{Tridomain model, reaction-diffusion system, homogenization theory, time-periodic unfolding method, gap junctions, cardiac electric field.}

\author{Fakhrielddine Bader$^*$ }
\address[Fakhrielddine Bader]{Institut de Recherche Mathématique de Rennes (IRMAR), UMR 6625 CNRS, Université de Rennes 1, Campus de Beaulieu, F-35042 Rennes cedex, France}
\email{fakhrielddine.bader@univ-rennes1.fr}

\author{Mostafa Bendahmane}
\address[Mostafa Bendahmane]{Institut de Mathématiques de Bordeaux (IMB) and INRIA-Carmen Bordeaux Sud-Ouest, Université de Bordeaux, 33076 Bordeaux Cedex, France}
\email{mostafa.bendahmane@u-bordeaux.fr}

\author{Mazen Saad}
\address[Mazen Saad]{Laboratoire de Mathématiques Jean Leray (LMJL), UMR 6629 CNRS, École Centrale de Nantes, 1 rue de Noé, 44321 Nantes, France}
\email{mazen.saad@ec-nantes.fr}

\author{Raafat Talhouk}
\address[Raafat Talhouk]{Léonard de Vinci Pôle Universitaire, Research Center, 92 916 Paris La Défense, France \& Department of Mathematics, Faculty of Sciences 1, Laboratory of Mathematics-DSST, Lebanese University Hadat, Lebanon}
\email{rtalhouk@ul.edu.lb}



\thanks{$^*$ Corresponding author: fakhrielddine.bader@gmail.com}

\begin{document}
\maketitle



\begin{abstract}
We study the homogenization of a novel microscopic tridomain system, allowing for a more detailed analysis of the properties of cardiac conduction than the classical bidomain and monodomain models. In \cite{BaderTridPart1}, we detail this model in which gap junctions are considered as the connections between adjacent cells in cardiac muscle and could serve as alternative or supporting pathways for cell-to-cell electrical signal propagation. Departing from this microscopic cellular model, we apply the periodic unfolding method to derive the macroscopic tridomain model.  Several difficulties prevent the application of unfolding homogenization results, including the degenerate temporal structure of the tridomain equations and a nonlinear dynamic boundary condition on the cellular membrane. To prove the convergence of the nonlinear terms, especially those defined on the microscopic interface, we use the boundary unfolding operator and a Kolmogorov-Riesz compactness's result. 
\end{abstract}
\tableofcontents

\section{Introduction}
The conduction of electrical waves in cardiac tissue is key to human life, as the synchronized contraction of the cardiac muscle is controlled by electrical impulses that travel in a coordinated manner throughout the heart chambers. Under pathological conditions cardiac conduction can be severely reduced, potentially leading to reentrant arrhythmias and ultimately death if normal propagation is not restored properly. At a sub-cellular level, electrical communication in cardiac tissue occurs by means of a rapid flow of ions moving through the cytoplasm of cardiac cells, and a slower inter-cellular flow mediated by gap junctions embedded in the intercalated discs (see Figure \ref{cardio}). Gap junctions are inter-cellular channels composed by hemichannels of specialized proteins, known as connexions, that control the passage of ions between neighboring cells.
 \begin{figure}[h!]
  \centering
  \includegraphics[width=13cm]{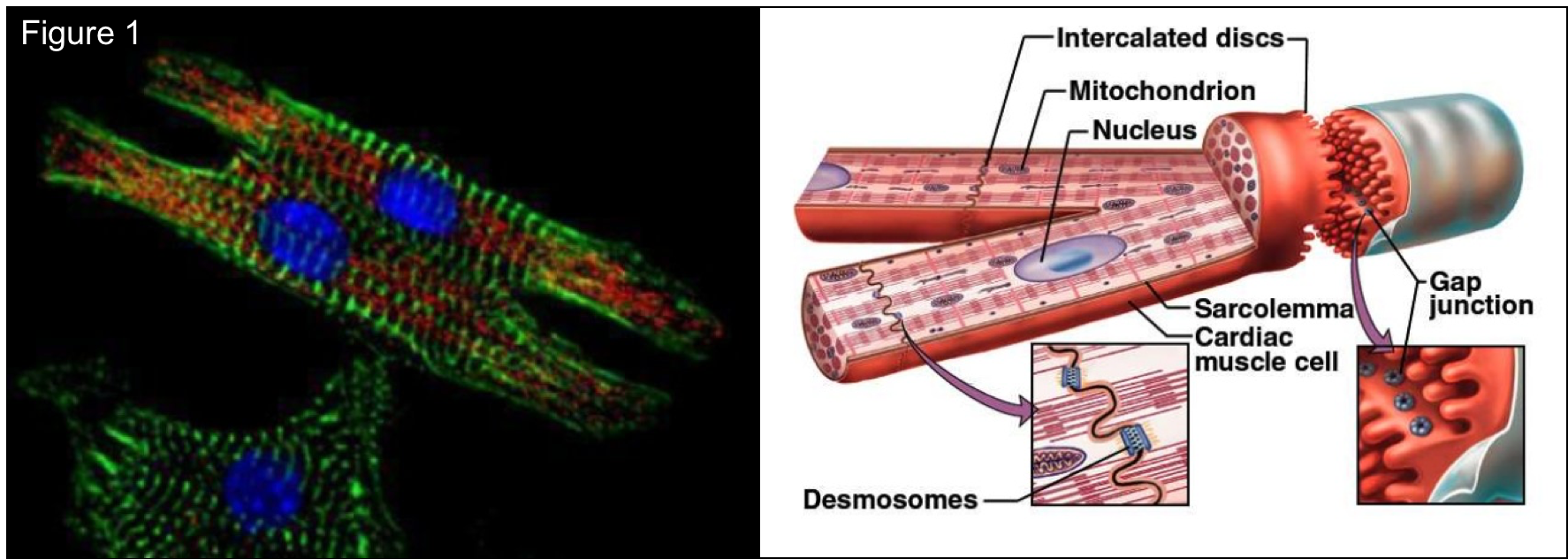}
  \caption{Representation of the cardiomyocyte structure}
  \url{http://www.cardio-research.com/cardiomyocytes}
  \label{cardio}
 \end{figure}
 
  Starting from a more accurate microscopic (cell-level) model of cardiac tissue, with the heterogeneity of the underlying cellular geometry represented in great detail, it is possible to derive the macroscopic tridomain model (tissue-level) using the homogenization method. The microscopic tridomain model consists of three quasi-static equations, two for the electrical potential in the intracellular medium and one for the extracellular medium, coupled by ordinary differential equations describing the dynamics of the ions channels  at each membrane (the sarcolemma) and at gap junctions. These equations depend on scaling parameter $\e$ whose is the ratio of the microscopic scale from the macroscopic one. The microscopic tridomain model was proposed three years ago \cite{tveito17,tveito19} in the case of just two coupled cells. Recently, we have extended in \cite{BaderTridPart1} this microscopic tridomain model to larger collections of cells. Further, we have established the well-posedness of this  problem and proved the existence and uniqueness of their solutions based on Faedo-Galerkin method.

 The macroscopic tridomain model is used as a quantitative description of the electric activity in cardiac tissue with dynamical gap junctions. The relevant unknowns are the two intracellular $u_i^k$ for $k=1,2$ and extracellular $u_e$ potentials, along with the so-called transmembrane potential $v^k := u_i^k-u_e$ for $k=1,2$ and the so-called gap potential $s := u_i^1-u_i^2$. In this model, the intra- and extracellular spaces are considered at macro-scale as two separate homogeneous domains superimposed on the cardiac domain. Conduction of electrical signals in cardiac tissue relies on the flow of ions through cell membrane and gap junctions. Each intracellular domain and extracellular one are separated by the cell membrane while the two intracellular domains are connected by gap junctions (see Figure \ref{two_scale_gap}).  The macroscopic tridomain model can be viewed as a PDE system consisting of three degenerate reaction-diffusion equations involving the unknowns $(u_i^1$, $u_i^2$, $u_e)$. These equations are supplemented by a ODE system for the dynamics of the ion channels through the cell membrane (involving the gating variable $w^k$ for $k=1,2$).

 Regarding the classical bidomain model in the literature, there are formal and rigorous mathematical derivations of the macroscopic model from a microscopic description of heart tissue. From a mathematical point of view, Krassowska et al. \cite{neukra} applied the two-scale method to formally obtain this macroscopic model (see also \cite{amar06,henri} for different approaches). Furthermore, Pennachio et al. \cite{colli05} used the tools of the $\Gamma$-convergence method to obtain a rigorous mathematical form of this homogenized macroscopic model. Amar et al. \cite{amar13} studied a hierarchy of electrical conduction problems in biological tissues via two-scale convergence. While, the authors in \cite{bendunf19} proved the existence and uniqueness of solution of the microscopic bidomain model based on Faedo-Galerkin technique. Further, they used the periodic unfolding method at two scales to show that the solution of the microscopic biodmain model converges to the solution of the macroscopic one. Recently, we have developed the meso-microscopic bidomain model by taking account three different scales and derived a new approach of its macroscopic model using two different homogenization methods. The first method \cite{BaderDev} is a formal and intuitive method based on a new three-scale asymptotic expansion method applied to the meso- and microscopic model. The second one \cite{BaderUnf} based on unfolding operators which not only derive the homogenized equation but also prove the convergence and rigorously justify the mathematical writing of the preceding asymptotic expansion method.

  \textit{The main contribution of our paper} is to provide a simple homogenization proof that can handle some relevant nonlinear membrane models (the FitzHugh-Nagumo model), relying only on unfolding operators. More precisely, we show that the solution constructed in the microscopic tridomain problem converge to the solution of the macroscopic (homogenized) tridomain model. So, we will derive the homogenized tridomain model of cardiac electro-physiology from the microscopic one using the periodic unfolding technique. The latter method not only makes it possible to derive the homogenized equation but also to prove the convergence and to rigorously justify the mathematical writing of the preceding formal method. The homogenization method that we propose allows us to investigate the effective properties of the cardiac tissue at each structural level, namely, micro-macro scales.  
  
  \textit{The paper is organized as follows:} Section \ref{geotrid} is devoted to the geometrical setting and to the introduction of the microscopic tridomain problem. In Section \ref{main_results_trid},  we state our main homogenization results. Next, some notations and properties on the domain and boundary unfolding operators are introduced in Section \ref{time-depending operators}. Finally, Section \ref{methodunf_gap} is devoted to  homogenization procedure based on unfolding operators.

\section{Tridomain modeling of the heart tissue}\label{geotrid}
The aim of this section is to describe the geometry of the cardiac tissue and to present the microscopic tridomain model of the heart.
\subsection{Geometrical setting of heart tissue}
 Let $\Omega$ be an open connected bounded subset of $\R^d,$ $d \geq 3$. The typical periodic geometrical setting is displayed in Figure \ref{two_scale_gap}.  

 \begin{figure}[h!]
  \centering
  \includegraphics[width=12cm]{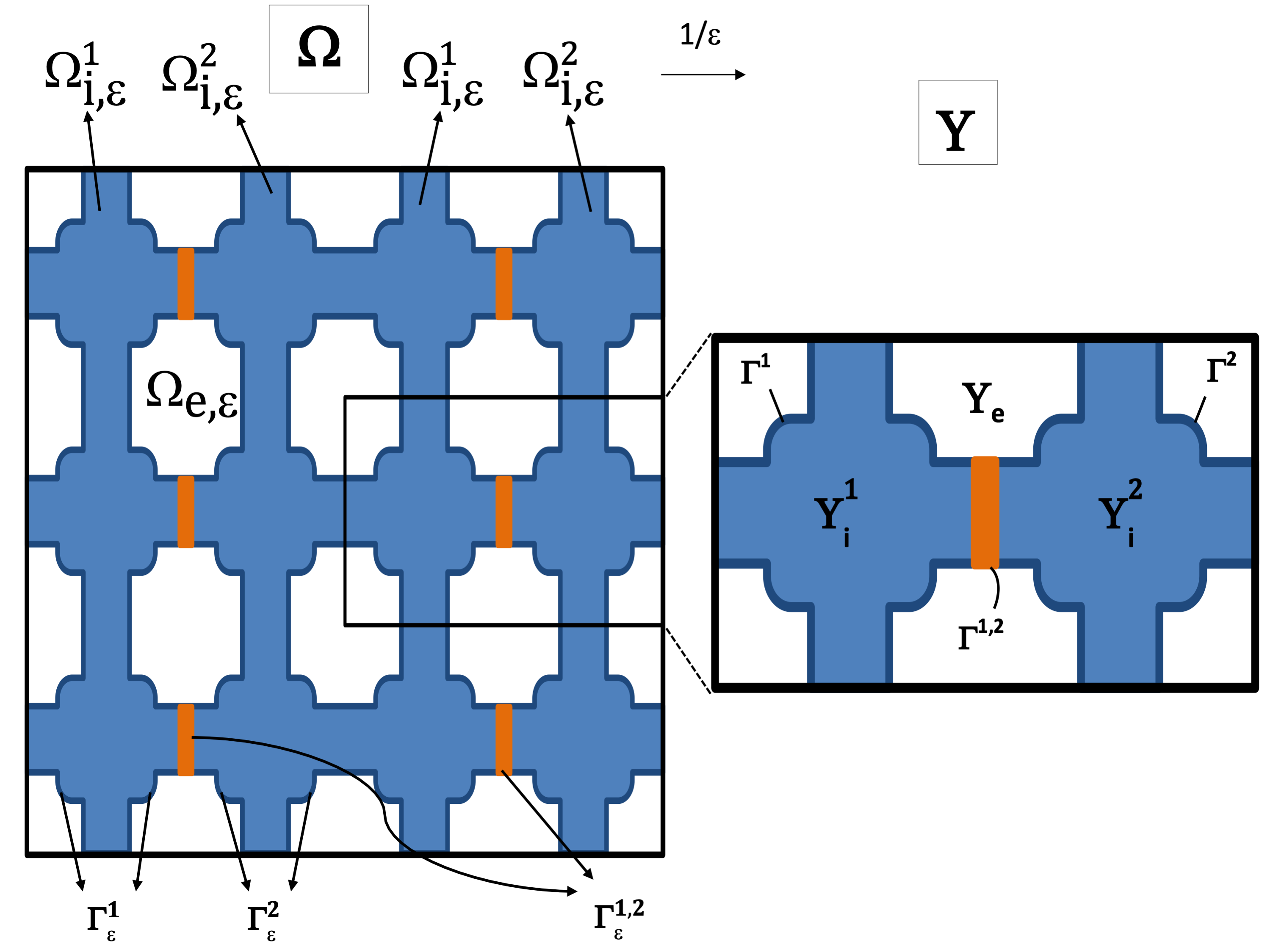}
  \caption{(Left) Periodic heterogeneous domain $\Omega.$ (Right) Reference cell $Y$ at $\e$-structural level.}
  \label{two_scale_gap}
 \end{figure} 

Let $\e\in (0,1)$ be a small positive parameter, related to the characteristic dimension of the micro-structure and which takes values in a sequence of strictly positive numbers tending to zero. Under the one-level scaling, the characteristic length $\ell^\text{mic}$ is related to a given macroscopic  length $L$ (of the cardiac fibers), such that the scaling parameter $\e$ introduced by:
 $$\e=\frac{\ell^\text{mic}}{L}.$$

From the biological point of view, the cardiac cells are connected by many gap junctions. Therefore, geometrically, $\Omega$ represents the region occupied by the cardiac tissue and  consists of two intracellular media $\Omega_{i,\e}^{k}$ for $k=1,2,$ that are connected by gap junctions $\Gamma^{1,2}_{\e}=\pt \Omega_{i,\e}^{1} \cap \pt \Omega_{i,\e}^{2}$ and extracellular medium $\Omega_{e,\e}$ (for more details see \cite{tveito17,tveito19}). 
 Each intracellular medium $\Omega_{i,\e}^{k}$ and the extracellular one  $\Omega_{e,\e}$ are separated by the surface membrane $\Gamma_{\e}^{k}$ (the sarcolemma) which is expressed by: $$\Gamma_{\e}^{k}=\pt \Omega_{i,\e}^{k} \cap \pt \Omega_{e,\e}, \text{ with } k=1,2,$$ while the remaining (exterior) boundary is denoted by $\pt_{\text{ext}} \Omega$. We can consider that the intracellular zone as a perforated domain obtained from $\Omega$ by removing the holes which correspond to the extracellular domain $\Omega_{e,\e}.$ 
 
  We can divide $\Omega$ into $N_{\e}$ small elementary cells $Y_{\e}=\overset{d}{\underset{n=1}{\prod }}]0,\e \, \ell^\text{mic}_n[,$  with $\ell^\text{mic}_1,\dots,\ell^\text{mic}_d$ are positive numbers. These small cells are all equal, thanks to a translation and scaling by $\e,$ to the same reference cell of periodicity called the reference cell  $Y=\overset{d}{\underset{n=1}{\prod }}]0,\ell^\text{mic}_n[.$ So, the $\e$-dilation of the reference cell $Y$ is defined as the following shifted set $Y_{\e,h}:$
\begin{equation}
 Y_{\e,h}:=T^h_\e+\e Y=\lbrace \e \xi : \xi \in h_\ell+Y \rbrace,
\label{trans_Y}
 \end{equation}
 where $T_\e^h$ represents the translation of $\e h$ with $h=( h_1,\dots, h_d ) \in\mathbb{Z}^d$ and  $h_\ell:=( h_1\ell^\text{mic}_1,\dots,  h_d \ell^\text{mic}_d ).$\\
 Therefore, for each macroscopic variable $x$ that belongs to $\Omega,$ we define the corresponding microscopic variable $y\approx\dfrac{x}{\e}$ that belongs to $Y$ with a  translation. Indeed, we have:
 \begin{equation*}
 x \in \Omega \Rightarrow \exists h \in\mathbb{Z}^d  \ \text{ such that }  \ x \in Y^h_\e \Rightarrow x=\e (h_\ell+y) \Rightarrow y=\dfrac{x}{\e}-h_\ell \in Y.
 \end{equation*}

 Since we will study the behavior of the functions $u(x,y)$ which are $\textbf{y}$-periodic, by periodicity we have $u\left( x,\dfrac{x}{\e}-h_\ell\right)  =u\left( x,\dfrac{x}{\e}\right) .$ By construction, we say that $y=\dfrac{x}{\e}$ belongs to $Y.$

 We are assuming that the cells are periodically organized as a regular network of interconnected cylinders at the microscale. The microscopic reference cell $Y$ is also divided into three disjoint connected parts: two intracellular parts $Y_i^{k}$ for $k=1,2,$ that are connected by an intercalated disc (gap junction) $\Gamma^{1,2}$ and extracellular part $Y_e.$ Each intracellular parts $Y_i^k$ and the extracellular one are separated by a common boundary $\Gamma^{k}$ for $k=1,2.$ So, we have:
 \begin{equation*}
 Y:=\overline{Y}_i^{1} \cup \overline{Y}_i^{2} \cup \overline{Y}_e, \quad \Gamma^{k}:= \pt Y_i^{k} \cap \pt Y_e,\quad\Gamma^{1,2}:= \pt Y_i^{1} \cap \pt Y_i^{2},
 \end{equation*}
  with $k=1,2.$ 
In a similar way, we can write the corresponding common periodic boundary as follows:
  \begin{equation}
\Gamma_{\e,h}=T^h_\e+\e \Gamma=\lbrace \e \xi : \xi \in h_\ell+\Gamma \rbrace,
\label{trans_gamma}
 \end{equation}
with $T^h_\e$ denote the same previous translation, $\Gamma_{\e,h}:=\Gamma^{k}_{\e,h},\Gamma^{1,2}_{\e,h}$ and $\Gamma:=\Gamma^{k},\Gamma^{1,2}$ for $k=1,2$.

 In summary, the intracellular and extracellular media can be described as follows:  
 \begin{equation}
 \begin{aligned}
&\Omega_{i,\e}^{k}=\Omega \cap \underset{h\in \mathbb{Z}^d}{\bigcup} Y^k_{i,\e,h}, \quad \Omega_{e,\e}=\Omega \cap \underset{h\in \mathbb{Z}^d}{\bigcup} Y_{e,\e,h},
\\& \quad \Gamma_{\e}^{k}=\Omega \cap \underset{h\in \mathbb{Z}^d}{\bigcup} \Gamma^{k}_{\e,h} \text{ and } \Gamma_{\e}^{1,2}=\Omega \cap \underset{h\in \mathbb{Z}^d}{\bigcup} \Gamma^{1,2}_{\e,h},
\end{aligned}
\label{intra_extra_domains}
\end{equation}
where $Y^k_{i,\e,h},$ $Y_{e,\e,h}$ and $\Gamma_{\e}^{k},\Gamma_{\e}^{1,2}$ are respectively defined as \eqref{trans_Y}-\eqref{trans_gamma} for $k=1,2$. 
\subsection{Microscopic tridomain model}

  The electric properties of the tissue at cellular level are described by the intracellular $u_{i,\e}^{k}$ for $k=1,2$ and extracellular
$u_{e,\e}$, potentials respectively with the associated conductivities $\mathrm{M}_{i}^{\e}$ and $\mathrm{M}_{e}^{\e}$. In \cite{BaderTridPart1}, we presented and studied in details the non-dimensional tridomain model with respect the scaling parameter $\e$, as well as the models chosen for the membrane and gap junctions dynamics.  More precisely, we consider the following microscopic tridomain model:
 
\begin{subequations}
\begin{align}
-\nabla\cdot\left( \mathrm{M}_{i}^{\e}\nabla u_{i,\e}^{k}\right)  &=0 &&\text{ in } \Omega_{i,\e,T}^{k}:=(0,T)\times\Omega_{i,\e}^{k}, 
\label{trid_intra}
\\ -\nabla\cdot\left( \mathrm{M}_{e}^{\e}\nabla u_{e,\e}\right)  &=0 &&\text{ in } \Omega_{e,\e,T}:=(0,T)\times\Omega_{e,\e}, 
\label{trid_extra}
\\u_{i,\e}^{k}-u_{e,\e}&=v_{\e}^{k} &&\ \text{on} \ \Gamma_{\e,T}^{k}:=(0,T)\times\Gamma_{\e}^{k},
\label{trid_v}
\\-\mathrm{M}_{i}^{\e}\nabla u_{i,\e}^{k} \cdot n_i^{k}=\mathrm{M}_{e}^{\e}\nabla u_{e,\e} \cdot n_{e} & =\I_{m}^{k} &&\ \text{on} \ \Gamma_{\e,T}^{k},
\label{trid_cont_sarco}
\\ \e\left( \pt_{t} v_{\e}^{k}+\I_{ion}\left(v_{\e}^{k},w_{\e}^{k}\right) -\I_{app,\e}^{k}\right) &=\I_{m}^{k} &&\ \text{on} \ \Gamma_{\e,T}^{k},
\label{trid_bord_sarco}
\\ \pt_{t} w_{\e}^{k}-H\left(v_{\e}^{k},w_{\e}^{k}\right) &=0 && \text{ on }  \Gamma_{\e,T}^{k},
\label{trid_dyn_sarco}
\\u_{i,\e}^{1}-u_{i,\e}^{2}&=s_{\e} &&\ \text{on} \ \Gamma_{\e,T}^{1,2}:=(0,T)\times\Gamma_{\e}^{1,2},
\label{trid_s}
\\-\mathrm{M}_{i}^{\e}\nabla u_{i,\e}^{1} \cdot n_i^{1}=\mathrm{M}_{i}^{\e}\nabla u_{i,\e}^{2} \cdot n_i^{2}& =\I_{1,2} &&\ \text{on} \ \Gamma_{\e,T}^{1,2},
\label{trid_cont_gap}
\\ \frac{\e}{2}\left( \pt_{t} s_{\e}+\I_{gap}\left(s_{\e}\right) \right) &=\I_{1,2} &&\ \text{on} \ \Gamma_{\e,T}^{1,2},
\label{trid_bord_gap}
\end{align}
\label{pbscale_gap}
\end{subequations}
with $k=1,2$ and each equation corresponds to the following sense:
\eqref{trid_intra} Intra quasi-stationary conduction, \eqref{trid_extra} Extra quasi-stationary conduction, \eqref{trid_v} Transmembrane potential, \eqref{trid_cont_sarco} Continuity equation at cell membrane, \eqref{trid_bord_sarco} Reaction condition at the corresponding cell membrane, \eqref{trid_dyn_sarco} Dynamic coupling, \eqref{trid_s} Gap junction potential, \eqref{trid_cont_gap} Continuity equation at gap junction, \eqref{trid_bord_sarco} Reaction condition at gap junction. 

  Observe that the tridomain equations \eqref{trid_intra}-\eqref{trid_extra} are invariant with respect to the scaling parameter $\e$. As usual in homogenization theory, the electrical potentials are assumed to have the following form
$$u_{i,\e}^{k}(t,x):=u_{i}^{k}\left( t, x,\frac{x}{\e}\right), \ \ u_{e,\e}(t,x):= u_{e}\left( t, x,\frac{x}{\e}\right), \text{ for } k=1,2,$$
where each function depends on time $t \in (0,T)$, slow (macroscopic) variable $x$ and the fast (microscopic) variable $y=x/\e$.
Similarly, the transmembrane potential $v_{\e}^{k},$  the gap junction potential $s_{\e}$ and the corresponding gating variable $w_{\e}^{k}$ for $k=1,2$ have the same previous form. Furthermore, the conductivity tensors are considered symmetric and dependent both on the slow and fast variables, i.e. for $j=i,e,$ we have 
\begin{equation}
\mathrm{M}_{j}^{\e}(x):=\mathrm{M}_{j}\left( x,\frac{x}{\e}\right),
\label{M_ie_gap}
\end{equation}
satisfying the elliptic and periodicity conditions: there exist constants $\alpha, \beta \in \R,$ such that $0<\alpha<\beta$ and for all $\lambda\in \R^d:$
\begin{subequations}
\begin{align}
&\mathrm{M}_j\lambda\cdot\lambda \geq \alpha\abs{\lambda}^2, 
\\& \abs{\mathrm{M}_j\lambda}\leq \beta \abs{\lambda},
\\& \mathrm{M}_j \ \mathbf{y}\text{-periodic},\text{ for }  j=i,e.
\end{align}
\label{A_Mie_gap}
\end{subequations}
We complete system \eqref{pbscale_gap} with no-flux boundary conditions on $\pt_{\text{ext}} \Omega$: 
\begin{equation*}
\left( \mathrm{M}_{i}^{\e}\nabla u_{i,\e}^{k}\right) \cdot \mathbf{n}=\left( \mathrm{M}_{e}^{\e}\nabla u_{e,\e}\right) \cdot\mathbf{n}=0 \ \text{ on } \  (0,T)\times \pt_{\text{ext}} \Omega,
\end{equation*}
where $k=1,2$ and $\mathbf{n}$ is the outward reference normal to the exterior boundary of $\Omega.$ We impose initial conditions on transmembrane potential $v_{\e}^{k},$ gap junction potential $s_{\e}$ and gating variable $w_{\e}^{k}$ as follows: 
\begin{equation}
\begin{aligned}
& v_{\e}^{k}(0,x)=v_{0,\e}^{k}(x), \ w_{\e}^{k}(0,x)=w_{0,\e}^{k}(x) & \text{ a.e. on } \Gamma_{\e,T}^{k}, 
\\ & \text{and } s_{\e}(0,x)=s_{0,\e}(x) & \text{ a.e. on } \Gamma_{\e,T}^{1,2},
\end{aligned}
\label{cond_ini_vws_gap}
\end{equation}
with $k=1,2.$ \\ \\
Next, we introduce some assumptions on the ionic functions, the source term and the initial data.\\ 
\textbf{Assumptions on the ionic functions.} The ionic current $\I_{ion}(v^{k},w^{k})$ at each cell membrane $\Gamma^{k}$ can be decomposed into $\mathrm{I}_{a,ion}\left( v^{k} \right)$ and $\mathrm{I}_{b,ion}\left( w^{k}\right) ,$ where $\I_{ion}\left(v^{k},w^{k}\right)=\mathrm{I}_{a,ion}\left(v^{k}\right) +\mathrm{I}_{b,ion}\left(w^{k}\right) $ with $k=1,2.$ Furthermore, the nonlinear function $\mathrm{I}_{a,ion}: \R \rightarrow \R$ is considered as a $C^1$ function and the functions $\mathrm{I}_{b,ion}: \R \rightarrow \R$  and $H : \R^2 \rightarrow \R$ are considered as linear functions. Also, we assume that there exists $r\in (2,+\infty)$ and constants $\alpha_1,\alpha_2,\alpha_3, \alpha_4, \alpha_5, C>0$ and $\beta_1>0, \beta_2\geq 0$ such that:

\begin{subequations}
\begin{align}
&\dfrac{1}{\alpha_1} \abs{v}^{r-1}\leq \abs{\mathrm{I}_{a,ion}\left(v\right)}\leq \alpha_1\left(\abs{v}^{r-1}+1\right), \,\abs{\mathrm{I}_{b,ion}\left(w\right)}\leq \alpha_2(\abs{w}+1), 
\label{A_I_ab} 
\\& \abs{H(v,w)}\leq \alpha_3(\abs{v}+\abs{w}+1),\text{ and }
    \mathrm{I}_{b,ion}\left(w\right)v-\alpha_4 H(v,w)w\geq \alpha_5 \abs{w}^2,
\label{A_H_Ib_a}
\\& \tilde{\mathrm{I}}_{a,ion} : v\mapsto \mathrm{I}_{a,ion}(v)+\beta_1 v+\beta_2 \text{ is strictly increasing with } \lim \limits_{v\rightarrow 0} \tilde{\mathrm{I}}_{a,ion}(v)/v=0,
\label{A_tildeI_a_1} 
\\& \forall v,v' \in \R,\,\,\left(\tilde{\mathrm{I}}_{a,ion}(v)-\tilde{\mathrm{I}}_{a,ion}(v') \right)(v-v')\geq \dfrac{1}{C} \left(1+\abs{v}+\abs{v'} \right)^{r-2} \abs{v-v'}^{2},
\label{A_tildeI_a_2} 
\end{align}
\label{A_H_I_gap}
\end{subequations}
with $(v,w):=\left(v^{k},w^{k}\right)$ for $k=1,2.$

Now, we represent the gap junction $\Gamma_{\e}^{1,2}$ between intra-neighboring cells by a passive membrane:
\begin{equation}
\I_{gap}(s)=G_{gap}\times s,
\label{gap_model}
\end{equation}
where $G_{gap}=\frac{1}{R_{gap}}$ is the conductance of the gap junctions. A discussion of the modeling of the gap junctions is given in \cite{hogues}.
 \\ \\
\textbf{Assumptions on the source term.} There exists a constant $C$ independent of $\e$ such that the source term $\I_{app,\e}^{k}$ satisfies the following estimation for $k=1,2$:
\begin{equation}
\norm{\e^{1/2}\I_{app,\e}^{k}}_{L^{2}(\Gamma_{\e,T}^{k})}\leq C.
\label{A_iapp_gap}
\end{equation}  
\textbf{Assumptions on the initial data.} The initial condition $v_{0,\e}^{k},$ $s_{0,\e}$ and $w_{0,\e}^{k}$ satisfy the following estimation:
\begin{equation}
\sum\limits_{k=1,2}\norm{\e^{1/r}v_{0,\e}^{k}}_{L^{r}(\Gamma_{\e}^{k})}+\norm{\e^{1/2}s_{0,\e}}_{L^{2}(\Gamma_{\e}^{1,2})}+\sum\limits_{k=1,2}\norm{\e^{1/2}w_{0,\e}^{k}}_{L^{2}(\Gamma_{\e}^{k})}\leq C,
\label{A_vw0_gap}
\end{equation}
for some constant $C$ independent of $\e.$ Moreover, $v_{0,\e}^{k},$ $s_{0,\e}$ and $w_{0,\e}^{k}$ are assumed to be traces of uniformly bounded sequences in $C^{1}(\overline{\Omega})$ with $k=1,2.$

  Finally,  we observe that the equations in \eqref{pbscale_gap} are invariant under the change of $u_{i,\e}^{k},$ $k=1,2$ and $u_{e,\e}$ into $u_{i,\e}^{k}+c,$ $u_{e,\e}+c,$ for any $c\in\R.$ Therefore, we may impose the following normalization condition:
 \begin{equation}
 \int_{\Omega_{e,\e}} u_{e,\e} \ dx=0, \text{ for a.e. } t\in(0,T).
 \label{normalization}
 \end{equation}

\section{Main results}\label{main_results_trid}
 In this part, we highlight the main results obtained in our paper. Based on the a priori estimates and unfolding homogenization method, we can pass to the limit in the microscopic equations and derive the following homogenized problem: 
\begin{thm}[Macroscopic Tridomain Model]
Assume that conditions \eqref{A_Mie_gap}-\eqref{normalization} hold. Then, a sequence of solutions $\Bigl(u_{i,\e}^{1},u_{i,\e}^{2},u_{e,\e},w_{\e}^{1}, w_{\e}^{2}\Bigl)_{\e}$ of the microscopic tridomain model \eqref{pbscale_gap} converges as $\e \to 0$ to a weak solution $\Bigl(u_{i}^{1},u_{i}^{2},u_{e},w^{1}, w^{2}\Bigl)$ satisfying the following conditions:

\begin{enumerate}[label=(\Alph*)]
\item (Algebraic relation).
\begin{equation*}
\begin{aligned}
v^{k}&=u_{i}^{k}-u_{e}  &\ \text{ for } k=1,2, \text{a.e. in} \ \Omega_{T},
\\ s &=u_{i}^{1}-u_{i}^{2} &\ \text{a.e. in} \ \Omega_{T}.
\end{aligned}
\end{equation*}
\item (Regularity).
\begin{equation*}
\begin{aligned}
 & u_i^{k},u_e\in L^2(0,T;H^1(\Omega)),
 \\& \int_{\Omega} u_{e}(t,x) \ dx=0, \text{ for a.e. } t\in(0,T),
\\ & v^{k} \in L^2(0,T;H^1(\Omega))\cap L^r(\Omega_T), \ r\in (2,+\infty),
\\ & s \in L^2(0,T;H^1(\Omega)), 
\\ & w^{k}\in C(0,T;L^2(\Omega)),  
\\ & \partial_t v^{k} \in L^2(0,T;(H^1(\Omega))')+L^{r/(r-1)}(\Omega_T),
\\ & \pt_t s, \pt_t w^{k} \in L^2(\Omega_T), \ k=1,2.
\end{aligned}
\end{equation*}
\item (Initial conditions).
\begin{equation*}
\begin{aligned}
& v^{k}(0,x)=v_{0}^{k}(x), \ w^{k}(0,x)=w_{0}^{k}(x), \ k=1,2 & \text{ a.e. in } \Omega, 
\\ & \text{and } s(0,x)=s_{0}(x) & \text{ a.e. in } \Omega.
\end{aligned}
\end{equation*}
\item (Boundary conditions).
\begin{equation*}
\left(\widetilde{\mathbf{M}}_{e}\nabla u_{e}\right)\cdot\mathbf{n}=\left( \widetilde{\mathbf{M}}_{i}\nabla u_{i}^{k}\right) \cdot\mathbf{n}=0 \ \text{ on } \  \Sigma_T:=(0,T)\times\pt_{\text{ext}} \Omega,
\end{equation*}
\item (Differential equations).
\begin{equation}
\begin{aligned}
\sum \limits_{k=1,2} \mu_{k}\pt_{t} v^{k}+\nabla \cdot\left(\widetilde{\mathbf{M}}_{e}\nabla u_{e}\right) +\sum \limits_{k=1,2}\mu_{k}\I_{ion}(v^{k},w^{k}) &= \sum \limits_{k=1,2}\mu_{k}\I_{app}^{k} &\text{ in } \Omega_{T},
\\ \mu_{1}\pt_{t} v^{1}+ \mu_{g}\pt_{t} s-\nabla \cdot\left( \widetilde{\mathbf{M}}_{i}\nabla u_{i}^{1}\right)+\mu_{1}\I_{ion}(v^{1},w^{1})+\mu_{g}\I_{gap}(s) &= \mu_{1} \I_{app}^{1} &\text{ in } \Omega_{T},
\\ \mu_{2}\pt_{t} v^{2}- \mu_{g}\pt_{t} s-\nabla \cdot\left( \widetilde{\mathbf{M}}_{i}\nabla u_{i}^{2}\right)+\mu_{2}\I_{ion}(v^{2},w^{2})-\mu_{g}\I_{gap}(s) &= \mu_{2} \I_{app}^{2} &\text{ in } \Omega_{T},
\\ \pt_{t} w^{k}-H(v^{k},w^{k}) &=0 & \text{ in }  \Omega_{T},
\label{pb_macro_gap}
\end{aligned}
\end{equation}
\end{enumerate}
where $\mu_{k}=\abs{\Gamma^{k}}/\abs{Y},$ $k=1,2,$ $\Big($resp. $\mu_{g}=\abs{\Gamma^{1,2}}/2\abs{Y}\Big)$ is the ratio between the surface membrane (resp. the gap junction) and the volume of the reference cell. Furthermore, $\mathbf{n}$ represents the outward reference normal to the boundary of $\Omega.$ Herein, the homogenized conductivity matrices $\widetilde{\mathbf{M}}_j=\left( \widetilde{\mathbf{m}}^{pq}_j\right)_{1\leq p,q \leq d}$ for $j=i,e$ are respectively defined by:
\begin{subequations}
\begin{align}
& \widetilde{\mathbf{m}}^{pq}_i:=\dfrac{1}{\abs{Y}}\overset{d}{\underset{\ell=1}{\sum}}\displaystyle\int_{Y_{i}^{k}}\left( \mathrm{m}_i^{pq}+\mathrm{m}^{p\ell}_{i}\dfrac{\pt \chi_i^q}{\pt y_\ell}\right) \ dy,
\label{tilde_m_i_gap}
\end{align}
\begin{align}
&\widetilde{\mathbf{m}}^{pq}_e:=\dfrac{1}{\abs{Y}}\overset{d}{\underset{\ell=1}{\sum}}\displaystyle\int_{Y_e}\left( \mathrm{m}_e^{pq}+\mathrm{m}^{p\ell}_{e}\dfrac{\pt \chi_e^q}{\pt y_\ell}\right) \ dy,
\label{tilde_m_e_gap}
\end{align}
\end{subequations}
where the components $\chi_{j}^{q}$ of $\chi_j$ for $j=i,e$ are respectively the corrector functions, solutions of the $\e$-cell problems:
\begin{subequations}
\begin{equation}
\begin{cases}
-\nabla_{y}\cdot\left( \mathrm{M}_{e}\nabla_{y} \chi_{e}^q\right) =\nabla_{y}\cdot\left( \mathrm{M}_{e} e_{q}\right) \ \text{in} \ Y_{e},
\\ \chi_e^q \ y\text{-periodic}, 
\\ \mathrm{M}_{e} \nabla_y \chi_e^q \cdot n_e= - (\mathrm{M}_{e}e_q )\cdot n_e \text{ on } \Gamma^{k}, \ k=1,2
\end{cases}
\end{equation}
\begin{equation}
\begin{cases}
-\nabla_{y}\cdot\left( \mathrm{M}_{i}\nabla_{y} \chi_{i}^q\right) =\nabla_{y}\cdot\left( \mathrm{M}_{i} e_{q}\right) \ \text{in} \ Y_{i}^{k},
\\ \chi_{i}^{q} \ y\text{-periodic},
 \\ \mathrm{M}_{i} \nabla_y \chi_i^q \cdot n_{i}^{k}= - (\mathrm{M}_{i}e_q )\cdot n_{i}^{k} \text{ on } \Gamma^{k}, \ k=1,2 
\\ \mathrm{M}_{i} \nabla_y \chi_i^q \cdot n_{i}^{k}= - (\mathrm{M}_{i}e_q )\cdot n_{i}^{k} \text{ on } \Gamma^{1,2},  
 \end{cases}
 \end{equation}
 \end{subequations}
for $e_q$, $q=1,\dots,d,$ the standard canonical basis in $\R^d.$
\label{thm_macro_gap}
\end{thm}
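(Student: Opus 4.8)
The plan is to run the periodic unfolding method in its three classical stages --- $\e$-uniform a priori estimates, extraction of two-scale limits, passage to the limit --- while replacing the usual compactness tools, which fail here because of the degenerate time structure (no diffusion term carries a time derivative), by a boundary-unfolding plus Kolmogorov--Riesz argument on the membrane. I would first record the mixed weak formulation of \eqref{pbscale_gap}, obtained by testing \eqref{trid_intra}, \eqref{trid_extra} with $\varphi_i^k,\varphi_e$, integrating by parts, and using \eqref{trid_cont_sarco}--\eqref{trid_bord_sarco}, \eqref{trid_cont_gap}--\eqref{trid_bord_gap} to remove the unknown currents $\I_m^k$, $\I_{1,2}$: it involves the bulk forms $\int_{\Omega_{j,\e}}\mathrm{M}_j^\e\nabla u_{j,\e}\cdot\nabla\varphi_j$, the membrane terms $\e\int_{\Gamma_\e^k}(\pt_t v_\e^k+\I_{ion}(v_\e^k,w_\e^k)-\I_{app,\e}^k)(\varphi_i^k-\varphi_e)$, and the gap term $\frac{\e}{2}\int_{\Gamma_\e^{1,2}}(\pt_t s_\e+\I_{gap}(s_\e))(\varphi_i^1-\varphi_i^2)$. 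Testing with $\varphi_j=u_{j,\e}$, summing, and invoking ellipticity \eqref{A_Mie_gap}, the structural conditions \eqref{A_H_I_gap}--\eqref{gap_model}, the data bounds \eqref{A_iapp_gap}--\eqref{A_vw0_gap} and Gronwall's lemma yields, uniformly in $\e$: $\nabla u_{j,\e}$ bounded in $L^2$, $\e^{1/2}v_\e^k$ and $\e^{1/r}v_\e^k$ bounded in $L^\infty(0,T;L^2(\Gamma_\e^k))$ resp.\ $L^\infty(0,T;L^r(\Gamma_\e^k))$, $\e^{1/2}s_\e$ bounded on $\Gamma_\e^{1,2}$, $\e^{1/2}w_\e^k$ bounded in $L^\infty(0,T;L^2(\Gamma_\e^k))$ and $\e^{1/2}\pt_t w_\e^k$ in $L^2$, and (after the $\e$-weighting) $\pt_t v_\e^k$ controlled in the dual of the test space; \eqref{normalization} plus a Poincar\'e--Wirtinger inequality then bounds $u_{e,\e}$ itself in $L^2(0,T;H^1(\Omega))$. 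Most of these estimates are already available from \cite{BaderTridPart1}.

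Applying the domain unfolding operator $\T_\e$ and the compactness properties recalled in Section \ref{time-depending operators}, I extract a subsequence with $\T_\e(u_{j,\e})\rightharpoonup u_j$ and $\T_\e(\nabla u_{j,\e})\rightharpoonup\nabla_x u_j+\nabla_y\hat u_j$ weakly in $L^2(\Omega_T\times Y_j)$, where $u_j\in L^2(0,T;H^1(\Omega))$ is $y$-independent (connectedness of the intracellular cylinders through $\Gamma^{1,2}$) and $\hat u_j\in L^2(\Omega_T;H^1_{\mathrm{per}}(Y_j)/\R)$; the boundary unfolding operator $\T^b_\e$ supplies the two-scale limits of $v_\e^k,s_\e,w_\e^k$, and the identity $\e\int_{\Gamma_\e}f\,d\sigma\simeq\frac{1}{\abs{Y}}\int_{\Omega\times\Gamma}\T^b_\e(f)\,dx\,d\sigma_y$ moves every membrane integral to a fixed domain, with $\T^b_\e(\varphi_j)\to\varphi_j$ uniformly for smooth $\varphi_j$. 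The delicate point is the passage to the limit in $\I_{a,ion}(v_\e^k)$, which is only $C^1$ with growth of order $r-1$: here I would show that $\{\T^b_\e(v_\e^k)\}$ is relatively compact in $L^p((0,T)\times\Omega\times\Gamma^k)$ for every $p<r$ via the Kolmogorov--Riesz criterion --- $x$-translation equicontinuity from the unfolded $H^1$-in-$x$ bound together with a trace inequality on $Y_j$, $t$-translation equicontinuity from the weak $\pt_t v_\e^k$ bound combined with the spatial regularity (a Simon/Aubin--Lions-type argument carried over to the unfolded membrane), tightness being automatic on the bounded reference configuration. Extracting an a.e.-convergent subsequence then gives $\I_{a,ion}(v_\e^k)\rightharpoonup\I_{a,ion}(v^k)$ weakly in $L^{r/(r-1)}$, which against the strongly convergent unfolded test functions suffices; the linear terms $\I_{b,ion},\I_{gap},H$ converge strongly in $L^2$, and since the gating equation is a linear ODE in $w$, $w_\e^k$ is a Duhamel integral of $v_\e^k$, so its strong convergence follows; the algebraic relations $v^k=u_i^k-u_e$, $s=u_i^1-u_i^2$ pass to the limit immediately.

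It remains to identify the homogenized problem by inserting oscillating test functions $\varphi_j(t,x)+\e\,\psi_j(t,x)\,\theta_j^q(x/\e)$ with $\psi_j\in C_c^\infty(\Omega_T)$, $\theta_j^q\in H^1_{\mathrm{per}}(Y_j)$, unfolding, and letting $\e\to0$: the $O(\e)$ part of the test function isolates the cell problems, giving $\hat u_j=\sum_{q=1}^d\chi_j^q\,\pt_{x_q}u_j$ with the stated correctors $\chi_j^q$; the $O(1)$ part, after eliminating $\hat u_j$, yields the homogenized matrices $\widetilde{\mathbf M}_j$ of \eqref{tilde_m_i_gap}--\eqref{tilde_m_e_gap} and, together with the limits above, the three reaction--diffusion equations \eqref{pb_macro_gap}, the coefficients $\mu_k=\abs{\Gamma^k}/\abs{Y}$ and $\mu_g=\abs{\Gamma^{1,2}}/2\abs{Y}$ arising from $\frac{1}{\abs{Y}}\int_\Gamma d\sigma_y$ and the $\e/2$ weight in \eqref{trid_bord_gap}. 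The ODE $\pt_t w^k=H(v^k,w^k)$, the homogeneous Neumann conditions on $\pt_{\text{ext}}\Omega$, and the normalization $\int_\Omega u_e\,dx=0$ pass to the limit directly; the initial conditions come from the time-continuity of $v^k,s,w^k$ and the assumption that $v_{0,\e}^k,s_{0,\e},w_{0,\e}^k$ are traces of $C^1(\ov\Omega)$-bounded sequences; and the regularity listed in (B) is read off from the uniform estimates and the limit equations --- in particular $\pt_t v^k\in L^2(0,T;(H^1(\Omega))')+L^{r/(r-1)}(\Omega_T)$ is obtained directly from \eqref{pb_macro_gap}.

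The main obstacle is the strong convergence of the membrane traces. Because the diffusion equations are quasi-static, no Aubin--Lions compactness is available in the bulk, and all compactness has to be manufactured on the lower-dimensional membrane, where the only time regularity is the weak $\pt_t v_\e^k$ bound coming from \eqref{trid_bord_sarco}. Upgrading this, together with the unfolded spatial $H^1$ bound, to \emph{strong} $L^p$ convergence of $\T^b_\e(v_\e^k)$ \emph{uniformly in} $\e$ is the crux; the super-quadratic growth of $\I_{a,ion}$ is precisely what makes weak convergence insufficient, and the interplay of the boundary unfolding operator with the Kolmogorov--Riesz theorem is the device that resolves it.
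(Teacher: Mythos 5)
Your overall strategy coincides with the paper's: uniform estimates imported from the companion paper, an unfolded reformulation of the variational identity, weak two-scale limits for the gradients, a Kolmogorov--Riesz compactness argument on the boundary-unfolded membrane potentials to handle $\mathrm{I}_{a,ion}$, and oscillating test functions $\phi+\e\,\theta\,\Theta(x/\e)$ to decouple the cell problems and produce $\widetilde{\mathbf M}_j$, $\mu_k$ and $\mu_g$. Your identification of the crux --- strong convergence of $\T^{b,k}_\e(v_\e^k)$ in the absence of any bulk Aubin--Lions compactness --- is also exactly the paper's.

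There is, however, one concrete step where the mechanism you propose would not work: you claim the $x$-translation equicontinuity required by condition (ii) of the Kolmogorov--Riesz criterion follows ``from the unfolded $H^1$-in-$x$ bound together with a trace inequality on $Y_j$.'' No such uniform $H^1$-in-$x$ bound on $\T^{b,k}_\e(v_\e^k)$ exists: the unfolding operator renders its argument piecewise constant in $x$ on cells of size $\e$, so $\nabla_x\T^{b,k}_\e(v_\e^k)$ is not controlled uniformly in $\e$, and the trace inequality of Remark \ref{trace_ineq_gap} only transfers $L^2$ bounds (with an $\e$-weighted $y$-gradient). Translating the unfolded function by $\xi$ amounts to comparing $v_\e^k$ at points offset by $\e\bigl(\sigma+[\xi/\e]_Y\bigr)$, i.e.\ by an integer number of periods, and estimating $\norm{v_\e^k(\cdot+\e\ell)-v_\e^k}_{L^2((0,T)\times\Gamma^k_{\e,Q})}$ uniformly in $\e$ requires a separate idea: the paper tests the microscopic weak formulation \eqref{Fv_ie_gap} with $\varphi_i^k=\eta^2(\tau_{\e\ell}u_{i,\e}^k-u_{i,\e}^k)$ and $\varphi_e=\eta^2(\tau_{\e\ell}u_{e,\e}-u_{e,\e})$ for a cut-off $\eta$, and applies Gronwall to obtain $\e\norm{\tau_{\e\ell}v_\e^k-v_\e^k}^2\le C\e\abs{\ell}$, whence $E^{\xi}_{a,\e}\le C(\abs{\xi}+\e)$ and uniformity in $\e$ by treating separately the small $\e$ and the finitely many remaining values of $\e$. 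Without this energy estimate on translated differences (or an equivalent substitute), condition (ii) is not verified and the compactness claim --- hence the passage to the limit in $\mathrm{I}_{a,ion}$ --- is unsupported. The rest of your outline (conditions (i) and (iii), the a.e.-convergence/Vitali upgrade for $\mathrm{I}_{a,ion}$, the corrector ansatz $\widehat u_j=\chi_j\cdot\nabla_x u_j$ and the resulting formulas for $\widetilde{\mathbf M}_j$) matches the paper.
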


The proof of Theorem \ref{thm_macro_gap} is proved rigorously in Section \ref{unf_gap} using unfolding homogenization method. The uniqueness of the solutions to the macroscopic model can be proved similar as that of the microscopic model with minor changes (see \cite{BaderTridPart1}). This implies that all the convergence results remain valid for the whole sequence. Furthermore, it is easy to verify that the macroscopic conductivity tensors of the intracellular and extracellular spaces are symmetric and positive definite (see Remark \ref{Mt_pos_ell}).

\begin{rem} The authors in \cite{bendunf19} treated the microscopic bidomain problem where the gap junction is ignored. They considered that there are only two intra- and extracellular media separated by a single membrane (sarcolemma). Comparing to \cite{bendunf19}, the microscopic tridomain model in our work consists of three elliptic equations coupled through three boundary conditions, two on each cell membrane and one on the gap junction which separates between two intracellular media. The macroscopic tridomain model is more general and complex than the classical monodomain and bidomain models. Using periodic unfolding homogenization method, we derive a new approach of the homogenized model \eqref{pb_macro_gap} from the microscopic tridomain problem \eqref{pbscale_gap}. 
\end{rem}

\begin{rem} Regarding the classical bidomain model \cite{colli12,bendunf19}, we can derive this model from our tridomain problem if we take $u_{i}^{1}=u_{i}^{2}$. 

\end{rem}

\section{Time-depending unfolding operators}\label{time-depending operators}
 \subsection{Unfolding operator and some basic properties}\label{unfop_gap} Under the notation \eqref{intra_extra_domains}, we begin with introducing the unfolding operator and describe some of its properties. For more properties and proofs, we refer to \cite{doinaunf18,doinaunf12}. First, we present the unfolding operators defined for perforated domains on the domain $(0,T)\times\Omega.$ Then we define boundary unfolding operators one on the membrane $(0,T)\times\Gamma^{k},$ $k=1,2$ and the other on the gap junction $(0,T)\times\Gamma^{1,2}$.
		
 In order to define an unfolding operator, we first introduce the following sets in $\R^d$ (see Figure \ref{fig_unf_gap})
 \begin{itemize}
 \item $\Xi_\e=\lbrace h\in \mathbb{Z}^d, \ \e(h_\ell+Y)\subset\Omega\rbrace,$
 \\
 \item $\widehat{\Omega}_{\e}=$ interior $\lbrace\underset{h \in \Xi_\e }{\bigcup} \e \left(h_\ell +\overline{Y}\right) \rbrace,$
 \\
 \item $\widehat{\Omega}_{e,\e}= $ interior $\lbrace\underset{h \in \Xi_\e }{\bigcup} \e \left(h_\ell +\overline{Y_e}\right)\rbrace,$
 \\
 \item $\widehat{\Omega}_{i,\e}^{k}=$ interior $\lbrace\underset{h \in \Xi_\e }{\bigcup} \e \left( h_\ell +\overline{Y_i^{k}}\right)\rbrace, \ k=1,2,$
 \\
  \item $\widehat{\Gamma}_{\e}^{k}=\lbrace y\in \Gamma^{k} : y\in \widehat{\Omega}_{\e}\rbrace, \ k=1,2,$
  \\
  \item $\widehat{\Gamma}_{\e}^{1,2}=\lbrace y\in \Gamma^{1,2} : y\in \widehat{\Omega}_{\e}\rbrace,$
 \\
 \item $\Lambda^\e=\Omega\setminus\widehat{\Omega}^{\e},$
 \\
 \item $\widehat{\Omega}_{\e,T}=(0,T)\times \widehat{\Omega}^{\e},$
 \\
 \item $\widehat{\Omega}_{i,\e,T}^{k}=(0,T)\times \widehat{\Omega}_{i,\e}^{k}, \ k=1,2, \qquad \widehat{\Omega}_{e,\e,T}=(0,T)\times \widehat{\Omega}_{e,\e}, $
 \\
 \item $\Lambda_{T}^\e=(0,T)\times \Lambda^\e,$
 \end{itemize}
 where $h_\ell:=( h_1\ell^\text{mic}_1,\dots,  h_d \ell^\text{mic}_d ).$
 \begin{figure}[h!]
  \centering
  \includegraphics[width=11cm]{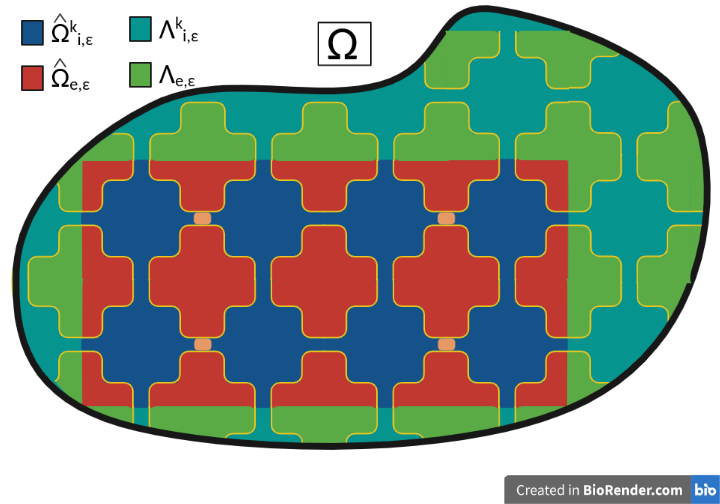}
  \caption{The sets $\widehat{\Omega}_{i,\e}^{k}$ for $k=1,2$ (in blue), $\widehat{\Omega}_{e}^{\e}$ (in red), $\Lambda_{i,\e}^{k}$ (in dark cyan)  and $\Lambda_{e,\e}$ (in green).}
  \label{fig_unf_gap}
 \end{figure} 
 For all $w\in \R^d,$ let $[w]_{Y}$ be the unique integer combination of the periods such that $w-[w]_{Y} \in Y.$ We may write $w=[w]_{Y}+\lbrace w\rbrace_Y$ for all $w\in \R^d,$ so that for all $\e>0,$ we get the unique decomposition:
 $$x=\e\left(\left[ \dfrac{x}{\e} \right]_Y + \left\lbrace \dfrac{x}{\e} \right\rbrace_Y  \right), \ \text{for all } x\in \R^d.  $$ 	

 Based on this decomposition, we define the unfolding operator in intra- and extracellular domains.
 
\begin{defi}[Domain and boundary unfolding operator \cite{doinaunf18,doinaunf12}] $ $ 
\begin{itemize}
\item[$1.$]For any function $\phi$ Lebesgue-measurable on the intracellular medium $\Omega_{i,\e,T}^{k}:=(0,T)\times\Omega_{i,\e}^{k}$ for $k=1,2$, the unfolding operator $\T_{\e}^{i,k}$ is defined as follows:
\begin{equation}
\T_{\e}^{i,k}(\phi)(t,x,y)=
\begin{cases}
\phi\left(t, \e\left[ \dfrac{x}{\e} \right]_Y +\e y \right) &\text{ a.e. for } (t, x, y) \in \widehat{\Omega}^{\e}_{T}\times Y_{i}^{k}, \\ 0  &\text{ a.e. for } (t, x, y) \in \Lambda^{\e}_{T}\times Y_{i}^{k},
\end{cases}
\label{op_ie_gap}
\end{equation}
where $\left[ \cdot\right] $ denotes the Gau$\beta$-bracket.
Similarly, we define the unfolding operator $\T_{\e}^{e}$ on the domain $\Omega_{e,T}^{\e}:=(0,T)\times\Omega_{e}^{\e}.$
We readily have that:
$$\forall x \in \R^d, \ \T_{\e}^{i,k}(\phi)\left(t, x, \left\lbrace \dfrac{x}{\e} \right\rbrace_Y  \right)=\phi(t,x), \text{ with } k=1,2. $$
\item[$2.$] For any function $\varphi$ Lebesgue-measurable on the membrane $\Gamma_{\e}^{k}:=(0,T)\times \Gamma_{\e}^{k}$ for $k=1,2,$ the boundary unfolding operator $\T_{\e}^{b,k}$ is defined as follows:
\begin{equation}
\T_{\e}^{b,k}(\varphi)(t,x,y)=
\begin{cases}
\varphi\left(t, \e\left[ \dfrac{x}{\e} \right]_Y +\e y \right) &\text{ a.e. for } (t, x, y) \in \widehat{\Omega}^{\e}_{T}\times \Gamma^{k}, \\ 0  &\text{ a.e. for } (t, x, y) \in \Lambda^{\e}_{T}\times \Gamma^{k}.
\end{cases}
\label{op_bk_e_gap}
\end{equation}
Similarly, we define the boundary unfolding operator $\T_{\e}^{b,1,2}$ on the gap junction $\Gamma_{\e,T}^{1,2}:=(0,T)\times \Gamma_{\e}^{1,2}.$
\end{itemize}
\end{defi}

\subsubsection{\textbf{Properties of the unfolding operator}}
In the following proposition, we state some basic properties of the unfolding operator which will be used frequently in the next sections.
\begin{prop}[Some properties of the unfolding operator \cite{doinaunf18,doinaunf12}] $ $
\begin{enumerate}
\item \label{P_uo1_gap} The operator $\T_{\e}^{i,k}: L^{p}\left(\Omega_{i,\e,T}^{k}\right) \longrightarrow L^{p}(\Omega_T\times Y_{i}^{k})$ and $\T_{\e}^{b,k} : L^p(\Gamma_{\e,T}^{k}) \longrightarrow L^p(\Omega_T\times \Gamma^{k})$ are linear and continuous for $p\in [1,+\infty)$ and $k=1,2.$ Similarly, we have the same properties for the unfolding operator $\T_{\e}^{e}$ and for the boundary unfolding operator $\T_{\e}^{b,1,2}.$
\\
\item \label{P_uo2_gap} For $u,u' \in L^{p}\left(\Omega_{i,\e,T}^{k}\right)$ and $v,w \in L^{p}\left(\Gamma_{\e,T}^{k}\right),$ it holds that  $\T_{\e}^{i,k}(uu')=\T_{\e}^{i,k}(u)\T_{\e}^{i,k}(u')$ and $\T_{\e}^{b,k}(vw)=\T_{\e}^{b,k}(v)\T_{\e}^{b,k}(w),$ with $p \in (1,+\infty)$ and $k=1,2.$
\\
\item \label{P_uo3_gap} For $u\in L^{p}\left(\Omega_{i,\e,T}^{k}\right), p \in [1,+\infty),$ we have $$\norm{\T_{\e}^{i,k}(u)}_{L^{p}(\Omega_T\times Y_{i}^{k})}=\abs{Y}^{1/p}\norm{u \mathds{1}_{\widehat{\Omega}_{i,\e,T}^{k}}}_{L^{p}\left( \Omega_{i,\e,T}^{k}\right)}\leq \abs{Y}^{1/p}\norm{u}_{L^{p}\left( \Omega_{i,\e,T}^{k}\right)}.$$

\item \label{P_uo4_gap} For $v \in L^{p}\left(\Gamma_{\e,T}^{k}\right),$ with $p \in [1,+\infty)$ and $k=1,2.$ Then we have $$\norm{\T_{\e}^{b,k}(v)}_{L^{p}(\Omega_T\times \Gamma^{k})}=\e^{1/p}\abs{Y}^{1/p}\norm{v}_{L^{p}(\widehat{\Gamma}_{\e,T}^{k})}\leq \e^{1/p}\abs{Y}^{1/p}\norm{v}_{L^{p}(\Gamma_{\e,T}^{k})}.$$

\item \label{P_uo5_gap} Let $\phi_\e \in L^p\left(0,T; W^{1,p}\left(\Omega\right)\right),$ with $p\in [1,+\infty)$ and $k=1,2.$ If $\phi_\e \rightarrow \phi$ strongly in $L^{p}(0,T;W^{1,p}(\Omega))$ as $\e\rightarrow 0,$   then  
\begin{align*}
&\T_{\e}^{i,k}(\phi_\e) \rightarrow \phi \text{ strongly in } L^{p}(\Omega_T\times Y_{i}^{k}),
\\& \T_{\e}^{b,k}(\phi_\e) \rightarrow \phi \vert_{\Gamma^{k}} \text{ strongly in } L^{p}(\Omega_T\times \Gamma^{k}) \text{ as } \e\rightarrow 0.
\end{align*}
\item \label{P_uo6_gap} For $u\in L^{p}\left(0,T; W\left(\Omega_{i,\e}^{k}\right)\right), p \in [1,+\infty),$ it holds that $\nabla_y\T_{\e}^{i,k}(u)=\e\T_{\e}^{i,k}(\nabla_x u)$ with $k=1,2.$
\end{enumerate}
\label{prop_uo_gap}
\end{prop}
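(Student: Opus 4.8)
The plan is to reduce every algebraic assertion to the single exact change-of-variables identity underlying the unfolding construction, and to treat the convergence statement, part~\ref{P_uo5_gap}, separately by a density argument.

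I would first establish the fundamental integration formula. Fix $u$ and $p\in[1,\infty)$. On each interior tile $\e(h_\ell+Y)$ with $h\in\Xi_\e$ the value $\T_\e^{i,k}(u)(t,x,y)=u\bigl(t,\e h_\ell+\e y\bigr)$ is independent of $x$, so integrating in $x$ over the tile (of measure $\e^d|Y|$) and then changing variables $z=\e h_\ell+\e y$ (with $dz=\e^d\,dy$) on the intracellular piece $Y^k_{i,\e,h}$ gives $\int_{\e(h_\ell+Y)}\int_{Y_i^k}|\T_\e^{i,k}(u)|^p\,dy\,dx=|Y|\int_{Y^k_{i,\e,h}}|u|^p\,dz$. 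Summing over $h\in\Xi_\e$ and integrating in $t$ yields the equality in part~\ref{P_uo3_gap}, since the unfolding vanishes on $\Lambda^\e_T$; the inequality is then immediate from $\widehat\Omega^k_{i,\e,T}\subset\Omega^k_{i,\e,T}$. The boundary formula part~\ref{P_uo4_gap} is identical except that the surface measure on the scaled copy $\Gamma^k_{\e,h}$ scales like $\e^{d-1}$ rather than $\e^d$, which produces the extra factor $\e$ and hence the $\e^{1/p}$ prefactor. Linearity in part~\ref{P_uo1_gap} is clear from the pointwise definitions \eqref{op_ie_gap}--\eqref{op_bk_e_gap}, and continuity is exactly the bounds just proved, so part~\ref{P_uo1_gap} follows from parts~\ref{P_uo3_gap}--\ref{P_uo4_gap}.

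The remaining algebraic and differential identities are pointwise. For part~\ref{P_uo2_gap}, since $\T_\e^{i,k}$ acts by composing the spatial argument with the fixed map $x\mapsto\e[x/\e]_Y+\e y$ and multiplication is taken pointwise, $\T_\e^{i,k}(uu')=\T_\e^{i,k}(u)\T_\e^{i,k}(u')$ holds on $\widehat\Omega^\e_T$ and trivially ($0=0\cdot0$) on $\Lambda^\e_T$; the boundary case is verbatim. For part~\ref{P_uo6_gap}, differentiating $y\mapsto u(t,\e h_\ell+\e y)$ by the chain rule produces the factor $\e$, i.e. $\nabla_y\T_\e^{i,k}(u)=\e\,\T_\e^{i,k}(\nabla_x u)$; for $u\in W^{1,p}$ this is understood weakly, the unfolded function being Sobolev in $y$ on each tile.

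The genuine work is in part~\ref{P_uo5_gap}. I would write $\T_\e^{i,k}(\phi_\e)-\phi=\T_\e^{i,k}(\phi_\e-\phi)+\bigl(\T_\e^{i,k}(\phi)-\phi\bigr)$. The first term is controlled by part~\ref{P_uo3_gap}: $\norm{\T_\e^{i,k}(\phi_\e-\phi)}_{L^p(\Omega_T\times Y_i^k)}\le|Y|^{1/p}\norm{\phi_\e-\phi}_{L^p(\Omega_T)}\to0$. For the second, fixed term I would use density of functions smooth in space in $L^p(0,T;W^{1,p}(\Omega))$: for a smooth $\psi$ the argument $\e[x/\e]_Y+\e y$ lies within $\e\,\mathrm{diam}(Y)$ of $x$, so uniform continuity gives $\T_\e^{i,k}(\psi)\to\psi$ uniformly on $\widehat\Omega^\e\times Y_i^k$, while the contribution of $\Lambda^\e_T$ vanishes since $|\Lambda^\e|\to0$; a triangle inequality with the uniform bound $\norm{\T_\e^{i,k}(\phi-\psi)}\le|Y|^{1/p}\norm{\phi-\psi}_{L^p}$ then closes the argument by taking $\psi$ close to $\phi$ first and $\e\to0$ afterwards. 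The boundary convergence is the main obstacle, because the same splitting requires bounding $\T_\e^{b,k}(\phi_\e-\phi)$ on the oscillating surface $\Gamma^k_\e$. By part~\ref{P_uo4_gap} this equals $\e^{1/p}|Y|^{1/p}\norm{\phi_\e-\phi}_{L^p(\Gamma^k_{\e,T})}$, and to show it vanishes I would invoke the scaled trace inequality on the periodically perforated domain, $\e\,\norm{\psi}^p_{L^p(\Gamma^k_\e)}\le C\bigl(\norm{\psi}^p_{L^p(\Omega)}+\e^p\norm{\nabla\psi}^p_{L^p(\Omega)}\bigr)$, uniform in $\e$; applied to $\psi=\phi_\e-\phi$ it yields $\e^{1/p}\norm{\phi_\e-\phi}_{L^p(\Gamma^k_{\e,T})}\le C\norm{\phi_\e-\phi}_{L^p(0,T;W^{1,p}(\Omega))}\to0$, which is exactly where the $W^{1,p}$ (rather than merely $L^p$) convergence is used. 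The remaining term $\T_\e^{b,k}(\phi)-\phi|_{\Gamma^k}$ is handled as in the interior case, approximating $\phi$ by smooth functions and using uniform continuity on $\widehat\Omega^\e\times\Gamma^k$ with $|\Lambda^\e|\to0$. The operators $\T_\e^{e}$ and $\T_\e^{b,1,2}$ are treated identically, replacing $Y_i^k,\Gamma^k$ by $Y_e,\Gamma^{1,2}$.
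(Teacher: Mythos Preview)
The paper does not actually prove this proposition: it is stated with a citation to the standard unfolding references \cite{doinaunf18,doinaunf12} and no argument is supplied. Your proof is correct and is precisely the classical one found in those references---the exact integration identity via the tiling decomposition and change of variables for parts~\ref{P_uo3_gap}--\ref{P_uo4_gap} (whence part~\ref{P_uo1_gap}), the pointwise-composition and chain-rule observations for parts~\ref{P_uo2_gap} and~\ref{P_uo6_gap}, and the splitting plus density argument for part~\ref{P_uo5_gap}, with the boundary case handled through the uniform scaled trace inequality that the paper itself records in Remark~\ref{trace_ineq_gap}.
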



\begin{rem} The unfolding operators $\T_{\e}^{b,k}$ and $\T_{\e}^{i,k}$ for $k=1,2$ are related in the following sense:
\begin{equation*}
\T_{\e}^{b,k}(u\vert_{\Gamma_{\e}^{k}})=\T_{\e}^{i,k}(u)\vert_{\Gamma^{k}}, \quad u \in L^p\left(0,T; W^{1,p}(\Omega_{i,\e}^{k})\right), \quad k=1,2,
\end{equation*}
for $p\in (1,+\infty)$ and a.e. $t\in (0,T)$. In particular, by the standard trace theorem in $Y_{i}^{k},$ there is a constant $C$ independent of $\e$ and $t$ such that
\begin{equation*}
 \norm{\T_{\e}^{b,k}(u)}_{L^{p}\left(\Omega_T \times \Gamma^{k}\right)}^p\leq C\left( \norm{\T_{\e}^{i,k}(u)}_{L^{p}\left(\Omega_T \times Y_{i}^{k}\right)}^p+\norm{\nabla_y \T_{\e}^{i,k}(u)}_{L^{p}\left(\Omega_T \times Y_{i}^{k}\right)}^p\right).
\end{equation*}
From the properties of $\T_{\e}^{i,k}(\cdot)$ in Proposition \ref{prop_uo_gap}, it follows that
\begin{equation*}
 \norm{\T_{\e}^{b,k}(u)}_{L^{p}\left(\Omega_T \times \Gamma^{k}\right)}^p\leq C\left( \norm{u}_{L^{p}\left(\Omega_{i,\e,T}^{k}\right)}^p+\e^p\norm{\nabla u}_{L^{p}\left(\Omega_{i,\e,T}^{k}\right)}^p\right).
\end{equation*}
Similarly, the trace theorem in $Y_e$ holds for $u \in L^p\left(0,T; W^{1,p}(\Omega_{e,\e})\right)$  (which can be found as Remark 4.2 in \cite{doinaunf12}).
\label{trace_ineq_gap}
\end{rem}
In the sequel, we will define $W_{\#}^{1,p}$ the periodic Sobolev space as follows:
\begin{defi} Let $\mathcal{O}$ be a reference cell and $p\in[1,+\infty)$. Then, we define
\begin{equation}
W_{\#}^{1,p}(\mathcal{O})=\lbrace u \in W^{1,p}(\mathcal{O}) \ \text{such that} \ u \ \text{is periodic with }  \M_{\mathcal{O}}(u)=0\rbrace,
\label{W_gap}
\end{equation}
where $\M_{\mathcal{O}}(u)=\dfrac{1}{\abs{\mathcal{O}}}\displaystyle\int_{\mathcal{O}} u \ dy.$
Its duality bracket is defined by $$F(u)=(F,u)_{(W_{\#}^{1,p}(\mathcal{O}))',W_{\#}^{1,p}(\mathcal{O})}=(F,u)_{(W^{1,p}(\mathcal{O}))',W^{1,p}(\mathcal{O})},\ \forall u \in W_{\#}^{1,p}(\mathcal{O}).$$
Furthermore, by the Poincaré-Wirtinger's inequality, the Banach space $W_{\#}^{1,p}$ has the following norm:
$$\norm {u}_{W_{\#}^{1,p}(\mathcal{O})}=\norm {\nabla u}_{L^p(\mathcal{O})}, \forall u \in W_{\#}^{1,p}(\mathcal{O}).$$
{\bfseries \textsc{Notation}:} We denote $W_{\#}^{1,2}(\mathcal{O})$ by $H_{\#}^{1}(\mathcal{O})$  for $p=2.$
\end{defi}

\subsection{Microscopic tridomain model}\label{unf_gap}  

 We start by stating the weak formulation of the microscopic tridomain model as given in the following definition.
\begin{defi}[Weak formulation of microscopic system]  A weak solution to problem \eqref{pbscale_gap}-\eqref{cond_ini_vws_gap} is a collection $(u_{i,\e}^{1}, u_{i,\e}^{2}, u_{e,\e}, w_{\e}^{1}, w_{\e}^{2})$ of functions satisfying the following conditions:


\begin{enumerate}[label=(\Alph*)]
\item (Algebraic relation).
\begin{equation*}
\begin{aligned}
v_{\e}^{k}&=(u_{i,\e}^{k}-u_{e,\e})\vert_{\Gamma_{\e,T}^{k}} &\ \text{a.e. on} \ \Gamma_{\e,T}^{k}, \text{ for } k=1,2,
\\ s_{\e}&=(u_{i,\e}^{1}-u_{i,\e}^{2})\vert_{\Gamma_{\e,T}^{1,2}} &\ \text{a.e. on} \ \Gamma_{\e,T}^{1,2}.
\end{aligned}
\end{equation*}
\item (Regularity).
\begin{equation*}
\begin{aligned}
 & u_{i,\e}^{k}\in L^{2}\left(0,T;H^{1}\left( \Omega_{i,\e}^{k}\right)\right), \quad u_{e}^{\e}\in L^{2}\left(0,T;H^{1}(\Omega_{e,\e})\right),
 \\& \int_{\Omega_{e,\e}} u_{e,\e}(t,x) \ dx=0, \text{ for a.e. } t\in(0,T),
\\ & v_{\e}^{k}\in L^{2}\left( 0,T;H^{1/2}\left(\Gamma_{\e}^{k}\right)\right)\cap L^{r}\left(\Gamma_{\e,T}^{k}\right), \ r\in (2,+\infty)
\\ & s_{\e}\in L^{2}\left(\Gamma_{\e,T}^{1,2}\right), \quad w_{\e}^{k} \in L^{2}(\Gamma_{\e,T}^{k}), \ k=1,2,
\\ & \pt_t v_{\e}^{k}, \ \pt_t w_{\e}^{k} \in L^{2}(\Gamma_{\e,T}^{k}) \text{ for } k=1,2, \quad \pt_t s_{\e} \in L^{2}(\Gamma_{\e,T}^{1,2}).
\end{aligned}
\end{equation*}
\item (Initial conditions).
\begin{equation*}
\begin{aligned}
& v_{\e}^{k}(0,x)=v_{0,\e}^{k}(x), \ w_{\e}^{k}(0,x)=w_{0,\e}^{k}(x) & \text{ a.e. on } \Gamma_{\e,T}^{k}, 
\\ & \text{and } s_{\e}(0,x)=s_{0,\e}(x) & \text{ a.e. on } \Gamma_{\e,T}^{1,2}. 
\end{aligned}
\end{equation*}
\item (Variational equations).
\begin{equation}
\begin{aligned}
&\sum \limits_{k=1,2}\iint_{\Gamma_{\e,T}^{k}} \e\pt_t v_{\e}^{k} \psi_{i}^{k} \ d\sigma_xdt+\iint_{\Gamma_{\e,T}^{1,2}} \frac{\e}{2}\pt_t s_{\e} \Psi \ d\sigma_xdt +\sum \limits_{k=1,2}\int_{\Omega_{i,\e,T}^{k}}\mathrm{M}_{i}^{\e}\nabla u_{i,\e}^{k}\cdot\nabla\varphi_{i}^{k} \ dxdt
\\& \quad +\sum \limits_{k=1,2}\iint_{\Gamma_{\e,T}^{k}} \e\I_{ion}\left(v_{\e}^{k},w_{\e}^{k}\right)\psi_{i}^{k} \ d\sigma_xdt+\frac{1}{2}\iint_{\Gamma_{\e,T}^{1,2}} \e \I_{gap}(s_{\e})\Psi \ d\sigma_xdt
\\& =\sum \limits_{k=1,2}\iint_{\Gamma_{\e,T}^{k}} \e\I_{app,\e}^{k}\psi_{i}^{k} \ d\sigma_xdt
\end{aligned}
\label{Fv_ik_ini_gap}
\end{equation}
\begin{equation}
\begin{aligned}
&\sum \limits_{k=1,2}\iint_{\Gamma_{\e,T}^{k}} \e\pt_t v_{\e}^{k} \psi_{e}^{k} \ d\sigma_xdt-\int_{\Omega_{e,\e,T}}\mathrm{M}_{e}^{\e}\nabla u_{e,\e}\cdot\nabla\varphi_{e} \ dxdt
\\&+\sum \limits_{k=1,2}\iint_{\Gamma_{\e,T}^{k}} \e\I_{ion}\left(v_{\e}^{k},w_{\e}^{k}\right)\psi_{e}^{k} \ d\sigma_xdt =\sum \limits_{k=1,2}\iint_{\Gamma_{\e,T}^{k}} \e\I_{app,\e}^{k}\psi_{e}^{k} \ d\sigma_xdt
\end{aligned}
\label{Fv_e_ini_gap}
\end{equation}
\begin{equation}
\iint_{\Gamma_{\e,T}^{k}} \pt_t w_{\e}^{k}e^{k} \ d\sigma_xdt=\iint_{\Gamma_{\e,T}^{k}} H\left(v_{\e}^{k},w_{\e}^{k}\right) e^{k} \ d\sigma_xdt
\label{Fv_d_ini_gap}
\end{equation}
\end{enumerate}
for all $\varphi_{i}^{k}\in L^{2}\left(0,T;H^{1}\left( \Omega_{i,\e}^{k}\right)\right),$ $\varphi_{e}\in L^{2}\left(0,T;H^{1}(\Omega_{e,\e})\right)$ with 
\begin{itemize}
\item $\psi^{k}=\psi_{i}^{k}-\psi_{e}^{k}:=\left(\varphi_{i}^{k}-\varphi_{e}\right)\vert_{\Gamma_{\e,T}^{k}} \in L^{2}\left( 0,T;H^{1/2}\left(\Gamma_{\e}^{k}\right)\right)\cap L^{r}\left(\Gamma_{\e,T}^{k}\right)$ for $k=1,2,$
\item $\Psi=\Psi_{i}^{1}-\Psi_{i}^{2}:=\left(\varphi_{i}^{1}-\varphi_{i}^{2}\right)\vert_{\Gamma_{\e,T}^{1,2}}\in  L^{2}(\Gamma_{\e,T}^{1,2}),$
\item $e^{k}\in L^{2}(\Gamma_{\e,T}^{k})$ for $k=1,2.$
\end{itemize}
\label{Fv_gap} 

\end{defi}

 
 Then, the existence of the weak solution for the microscopic tridomain problem \eqref{pbscale_gap}-\eqref{cond_ini_vws_gap} is given in the following theorem whose proof is the main issue of the article \cite{BaderTridPart1}, by using the Faedo-Galerkin method.

\begin{thm}[Microscopic Tridomain Model] Assume that the conditions \eqref{A_Mie_gap}-\eqref{A_vw0_gap} hold. Then, System \eqref{pbscale_gap}-\eqref{cond_ini_vws_gap} possesses a unique weak solution in the sense of Definition \ref{Fv_gap} for every fixed $\e>0$.  
 
 Furthermore, this solution verifies the following energy estimates: there exists constants $C_1, C_2, C_3, C_4$ independent of $\e$ such that:
\begin{equation}
\sum\limits_{k=1,2} \norm{\sqrt{\e}v_{\e}^{k}}_{L^{\infty}\left(0,T;L^2(\Gamma_{\e}^{k})\right)}^2+\sum\limits_{k=1,2} \norm{\sqrt{\e}w_{\e}^{k}}_{L^{\infty}\left(0,T;L^2(\Gamma_{\e}^{k})\right)}^2+\norm{\sqrt{\e} s_{\e}}_{L^{\infty}\left(0,T;L^2(\Gamma_{\e}^{1,2})\right)}^2\leq C_1,
\label{E_vw_gap}
\end{equation}

\begin{equation}
\sum\limits_{k=1,2}\norm{u_{i,\e}^{k}}_{L^{2}\left(0,T;H^{1}\left( \Omega_{i,\e}^{k}\right) \right)}+\norm{u_{e}^{\e}}_{L^{2}\left(0,T;H^{1}\left(\Omega_{e,\e}\right) \right)}\leq C_2,
\label{E_u_gap}
\end{equation}

\begin{equation}
\sum\limits_{k=1,2}\norm{\e^{1/r}v_{\e}^{k}}_{L^{r}(\Gamma_{\e,T}^{k})}\leq C_3 \text{ and } \sum\limits_{k=1,2}\norm{\e^{(r-1)/r}\mathrm{I}_{a,ion}(v_{\e}^{k})}_{L^{r/(r-1)}(\Gamma_{\e,T}^{k})}\leq C_4.
\label{E_vr_gap}
\end{equation}
Moreover, if $v_{\e,0}^{k} \in H^{1/2}(\Gamma_{\e}^{k})\cap L^{r}(\Gamma_\e^{k}),$ $k=1,2,$ then there exists a constant $C_5$  independent of $\e$ such that:
\begin{equation}
\sum\limits_{k=1,2} \norm{\sqrt{\e}\pt_t v_{\e}^{k}}_{L^2(\Gamma_{\e,T}^{k})}^2+\sum\limits_{k=1,2} \norm{\sqrt{\e}\pt_t w_{\e}^{k}}_{L^2(\Gamma_{\e,T}^{k})}^2+\norm{\sqrt{\e} \pt_t s_{\e}}_{L^2(\Gamma_{\e,T}^{1,2})}^2\leq C_5.
\label{E_dtv_gap}
\end{equation}

\label{thm_micro_gap}
\end{thm}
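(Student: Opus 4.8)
\emph{Proof proposal.} The plan is to follow the Faedo--Galerkin strategy of \cite{BaderTridPart1}: build finite-dimensional approximations, derive $m$- and $\e$-uniform a priori bounds, and pass to the limit in the Galerkin index $m$ (with $\e>0$ fixed), the $\e$-uniformity of the constants being ensured by using only the structural constants of \eqref{A_Mie_gap}, \eqref{A_H_I_gap}, \eqref{A_iapp_gap}, \eqref{A_vw0_gap} together with Poincaré--Wirtinger and trace inequalities that are uniform over the periodic perforated geometry. For the Galerkin scheme I would fix $\e$, choose smooth families total in $H^1(\Omega_{i,\e}^k)$ ($k=1,2$), in $\{u\in H^1(\Omega_{e,\e}):\int_{\Omega_{e,\e}}u\,dx=0\}$ and in $L^2(\Gamma_\e^k)$, seek $u^k_{i,\e,m},u_{e,\e,m},w^k_{\e,m}$ in the span of the first $m$ elements, set $v^k_{\e,m}:=(u^k_{i,\e,m}-u_{e,\e,m})|_{\Gamma_\e^k}$, $s_{\e,m}:=(u^1_{i,\e,m}-u^2_{i,\e,m})|_{\Gamma_\e^{1,2}}$, and impose \eqref{Fv_ik_ini_gap}--\eqref{Fv_d_ini_gap} against the basis elements. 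This produces an ODE system for the coefficients whose time-derivative block is only positive semidefinite (the time derivative sees the unknowns only through their traces); I would remove this degeneracy in the usual way, e.g. by adding small bulk terms $\eta\,\pt_t u^k_{i,\e}$, $\eta\,\pt_t u_{e,\e}$ and letting $\eta\downarrow 0$ at the end. Since $\mathrm I_{a,ion}\in C^1$ while $\mathrm I_{b,ion}$, $H$, $\I_{gap}$ are affine, the right-hand side is locally Lipschitz, so Cauchy--Lipschitz gives a unique local solution, to be made global by the estimates.

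For \eqref{E_vw_gap}--\eqref{E_vr_gap} I would test \eqref{Fv_ik_ini_gap} with $u^k_{i,\e,m}$ and \eqref{Fv_e_ini_gap} with $u_{e,\e,m}$ and subtract the second from the first: the induced membrane test functions collapse to $\psi^k=v^k_{\e,m}$ and $\Psi=s_{\e,m}$, the two diffusion terms add to a coercive quantity by \eqref{A_Mie_gap}, and the boundary terms produce $\tfrac{d}{dt}\big(\tfrac{\e}{2}\sum_k\norm{v^k_{\e,m}}_{L^2(\Gamma_\e^k)}^2+\tfrac{\e}{4}\norm{s_{\e,m}}_{L^2(\Gamma_\e^{1,2})}^2\big)$. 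Writing $\I_{ion}=\mathrm I_{a,ion}+\mathrm I_{b,ion}$, conditions \eqref{A_tildeI_a_2} and \eqref{A_I_ab} give $\mathrm I_{a,ion}(v)v\ge c\abs{v}^r-C\abs{v}^2-C$; the cross term $\mathrm I_{b,ion}(w)v$ is absorbed by adding $\alpha_4\e$ times \eqref{Fv_d_ini_gap} tested with $w^k_{\e,m}$ and invoking $\mathrm I_{b,ion}(w)v-\alpha_4 H(v,w)w\ge\alpha_5\abs{w}^2$ from \eqref{A_H_Ib_a}; the gap term is nonnegative since $\I_{gap}(s)s=G_{gap}s^2$; and the source is controlled, after integration in time, by \eqref{A_iapp_gap} and Young's inequality. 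Grönwall together with \eqref{A_vw0_gap} would then give \eqref{E_vw_gap}, the gradient bounds and --- via the uniform Poincaré--Wirtinger inequality on $\Omega_{e,\e}$ and a uniform trace-Poincaré inequality on $\Omega_{i,\e}^k$ --- the full $H^1$ bounds \eqref{E_u_gap}, along with the $\e^{1/r}v^k_{\e,m}$ bound in \eqref{E_vr_gap}; the companion bound on $\e^{(r-1)/r}\mathrm I_{a,ion}(v^k_{\e,m})$ in $L^{r/(r-1)}$ then follows from the upper growth in \eqref{A_I_ab}. For \eqref{E_dtv_gap}, using that by \eqref{A_vw0_gap} the datum $v^k_{0,\e}$ is the trace of a $C^1(\ov{\Omega})$ function (hence in $H^{1/2}(\Gamma_\e^k)\cap L^r(\Gamma_\e^k)$), I would instead test with $\pt_t u^k_{i,\e,m}$ and $\pt_t u_{e,\e,m}$: the boundary terms yield $\e\sum_k\norm{\pt_t v^k_{\e,m}}_{L^2(\Gamma_\e^k)}^2+\tfrac{\e}{2}\norm{\pt_t s_{\e,m}}_{L^2(\Gamma_\e^{1,2})}^2$, the diffusion terms become $\tfrac12\tfrac{d}{dt}$ of a coercive quadratic form in the gradients, the term $\e\,\mathrm I_{a,ion}(v^k_{\e,m})\,\pt_t v^k_{\e,m}$ becomes $\tfrac{d}{dt}$ of $\e$ times a primitive of $\mathrm I_{a,ion}$ (finite at $t=0$ by the regularity of $v^k_{0,\e}$), and the affine and source terms are absorbed by Young and \eqref{A_iapp_gap}; Grönwall then gives the $\pt_t v^k_{\e,m}$, $\pt_t s_{\e,m}$ bounds, while $\pt_t w^k_{\e,m}=H(v^k_{\e,m},w^k_{\e,m})$ is bounded in $L^2(\Gamma_{\e,T}^k)$ by the linear growth of $H$.

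Finally I would pass to the limit: the uniform bounds give, along a subsequence, weak-$*$ limits in the spaces of Definition \ref{Fv_gap}, and every term but $\mathrm I_{a,ion}(v^k_{\e,m})$ is weakly continuous. The hard part is the strong convergence of the membrane traces $v^k_{\e,m}$ needed to handle this non-monotone nonlinearity: I would get it from the bound of $v^k_{\e,m}$ in $L^2(0,T;H^{1/2}(\Gamma_\e^k))$ and the $L^2(\Gamma_{\e,T}^k)$ bound on $\pt_t v^k_{\e,m}$ through an Aubin--Lions--Simon argument (the embedding $H^{1/2}(\Gamma_\e^k)\hookrightarrow L^2(\Gamma_\e^k)$ being compact), which yields strong $L^2(\Gamma_{\e,T}^k)$ and hence a.e.\ convergence, then upgrade to convergence of $\mathrm I_{a,ion}(v^k_{\e,m})$ in $L^{r/(r-1)}(\Gamma_{\e,T}^k)$ via the continuity of $\mathrm I_{a,ion}$, the uniform $L^r$ bound and Vitali's theorem; all other terms pass by linearity and the initial conditions are recovered from the $\pt_t$ bounds. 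For uniqueness I would test the equations for the difference of two solutions against the difference itself: the diffusion terms have a good sign, $\I_{gap}$, $\mathrm I_{b,ion}$, $H$ are affine, and the monotonicity estimate \eqref{A_tildeI_a_2} for $\tilde{\mathrm I}_{a,ion}$ (after absorbing the shift $\beta_1 v+\beta_2$ into the affine terms) makes the ionic contribution dissipative, so Grönwall applied to $\e\sum_k\norm{v^k_{\e}-\bar v^k_{\e}}_{L^2(\Gamma_\e^k)}^2+\tfrac{\e}{2}\norm{s_{\e}-\bar s_{\e}}_{L^2(\Gamma_\e^{1,2})}^2+\e\sum_k\norm{w^k_{\e}-\bar w^k_{\e}}_{L^2(\Gamma_\e^k)}^2$ forces all differences to vanish.
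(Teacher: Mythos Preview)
The paper does not prove this theorem in the text: it simply states that the proof ``is the main issue of the article \cite{BaderTridPart1}, by using the Faedo-Galerkin method'' and quotes the resulting estimates. Your proposal is precisely a Faedo--Galerkin outline of the type announced there (approximate solutions, energy estimates by testing with the solution, time-derivative estimates by testing with $\pt_t$ of the solution, Aubin--Lions compactness for the nonlinear membrane term, and a Grönwall-based uniqueness argument), so it matches the approach the paper attributes to \cite{BaderTridPart1}.
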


 By summing the two first equations in \eqref{Fv_ik_ini_gap}-\eqref{Fv_d_ini_gap} and since $\I_{ion}(v_{\e}^{k},w_{\e}^{k})=\mathrm{I}_{a,ion}(v_{\e}^{k})+\mathrm{I}_{b,ion}(w_{\e}^{k}),$ we can rewrite the weak formulation as follows:
\begin{equation}
\begin{aligned}
&\sum \limits_{k=1,2} \iint_{\Gamma_{\e,T}^{k}}\e \pt_t v_{\e}^{k} \psi^{k} \ d\sigma_xdt+\frac{1}{2}\iint_{\Gamma_{\e,T}^{1,2}}\e \pt_t s_{\e} \Psi \ d\sigma_xdt
\\&+\sum \limits_{k=1,2}\iint_{\Omega_{i,\e,T}^{k}}\mathrm{M}_{i}^{\e}\nabla u_{i,\e}^{k}\cdot\nabla\varphi_{i}^{k} \ dxdt+\iint_{\Omega_{e,\e,T}}\mathrm{M}_{e}^{\e}\nabla u_{e,\e}\cdot\nabla\varphi_{e} \ dxdt
\\&+\sum \limits_{k=1,2}\iint_{\Gamma_{\e,T}^{k}} \e\mathrm{I}_{a,ion}\left(v_{\e}^{k}\right)\psi^{k} \ d\sigma_xdt+\sum \limits_{k=1,2}\iint_{\Gamma_{\e,T}^{k}} \e\mathrm{I}_{b,ion}\left(w_{\e}^{k}\right)\psi^{k} \ d\sigma_xdt
\\&+\frac{1}{2}\iint_{\Gamma_{\e,T}^{1,2}} \e\I_{gap}\left(s_{\e}\right)\Psi \ d\sigma_xdt
=\sum \limits_{k=1,2}\iint_{\Gamma_{\e,T}^{k}} \e\I_{app,\e}^{k}\psi^{k} \ d\sigma_xdt,
\end{aligned}
\label{Fv_ie_gap}
\end{equation}
\begin{equation}
\iint_{\Gamma_{\e,T}^{k}} \pt_t w_{\e}^{k}e^{k} \ d\sigma_xdt=\iint_{\Gamma_{\e,T}^{k}} H\left(v_{\e}^{k},w_{\e}^{k}\right) e^{k} \ d\sigma_xdt.
\label{Fv_d_gap}
\end{equation}

\section{Unfolding Homogenization Method}\label{methodunf_gap}

Our derivation of the tridomain model is based on a new approach describing not only the electrical activity but also the effect of the cell membrane and gap junctions in the heart tissue. 
Our goal in this section is to describe the asymptotic behavior, as $\e \rightarrow 0$, of the solution $(u_{i,\e}^{1}, u_{i,\e}^{2}, u_{e,\e}, w_{\e}^{1}, w_{\e}^{2})$ given by System \eqref{pbscale_gap}-\eqref{cond_ini_vws_gap}. We do this by following a three-steps procedure: In Step \ref{unfolded formulation}, the weak formulation of the microscopic tridomain model \eqref{pbscale_gap}-\eqref{cond_ini_vws_gap} is written by another one, called "unfolded" formulation, based on the unfolding operators stated in the previous part. As Step \ref{convergence unfolded formulation}, we can pass to the limit as $\e\rightarrow 0$ in the unfolded formulation using some a priori estimates and compactness argument to get the corresponding homogenization equation. In Step \ref{macro_gap}, we take a special form of test functions to obtain finally the macroscopic tridomain model.
 
\subsection{Unfolded formulation of the microscopic tridomain model}\label{unfolded formulation}{} Based on the properties of the unfolding operators, we rewrite the weak formulation \eqref{Fv_ie_gap}-\eqref{Fv_d_gap} in the "unfolded" form. First, we denote by $E_i$ with $i = 1,\dots, 5$ the terms of the equation \eqref{Fv_ie_gap} which is rewritten as follows (to respect the order):
 \begin{equation*}
E_1+E_2+E_3+E_4+E_5+E_6+E_7=E_8.
 \end{equation*}
Using property \eqref{P_uo4_gap} of Proposition \ref{prop_uo_gap}, then the first and second term of \eqref{Fv_ie_gap}  is rewritten as follows:
\begin{align*}
E_1 
&=\sum \limits_{k=1,2}\iint_{\widehat{\Gamma}_{\e,T}^{k}} \e \pt_t v_{\e}^{k} \psi^{k} \ d\sigma_xdt+\sum \limits_{k=1,2}\iint_{\Gamma_{\e,T}^{k}\cap\Lambda_{\e,T}} \e \pt_t v_{\e}^{k} \psi^{k} \ d\sigma_xdt
\\&=\dfrac{1}{\abs{Y}}\sum \limits_{k=1,2}\iiint_{\Omega_{T}\times \Gamma^{k}}\T_{\e}^{b,k}(\pt_t v_{\e}^{k})\T_{\e}^{b,k}(\psi^{k})\ dxd\sigma_ydt+\sum \limits_{k=1,2}\iint_{\Gamma_{\e,T}^{k}\cap\Lambda_{\e,T}} \e\pt_t v_{\e}^{k}\psi^{k} \ d\sigma_xdt
\\& :=J_1+R_1.\\
E_2 
&=\frac{1}{2}\iint_{\widehat{\Gamma}_{\e,T}^{1,2}} \e \pt_t s_{\e} \Psi\ d\sigma_xdt+\frac{1}{2}\iint_{\Gamma_{\e,T}^{1,2}\cap\Lambda_{\e,T}} \e \pt_t s_{\e} \Psi \ d\sigma_xdt
\\&=\dfrac{1}{2\abs{Y}}\iiint_{\Omega_{T}\times \Gamma^{1,2}}\T_{\e}^{b,1,2}(\pt_t s_{\e})\T_{\e}^{b,1,2}(\Psi)\ dxd\sigma_ydt+\frac{1}{2}\iint_{\Gamma_{\e,T}^{1,2}\cap\Lambda_{\e,T}} \e\pt_t s_{\e}\Psi \ d\sigma_xdt
\\& :=J_2+R_2.
\end{align*}

Similarly, we rewrite the third and fourth term using the property \eqref{P_uo3_gap} of Proposition \ref{prop_uo_gap}:
\begin{align*}
E_3 &=\dfrac{1}{\abs{Y}}\sum \limits_{k=1,2}\iiint_{\Omega_{T}\times Y_{i}^{k}}\T_{\e}^{i,k}(\mathrm{M}_{i}^{\e}) \T_{\e}^{i,k}(\nabla u_{i,\e}^{k})\T_{\e}^{i,k}(\nabla \varphi_i^{k}) \ dxdydt
\\& \quad +\sum \limits_{k=1,2}\iint_{\Lambda_{i,\e,T}^{k}}\mathrm{M}_{i}^{\e}\nabla u_{i,\e}^{k}\cdot\nabla\varphi_i^{k} \ dxdt
\\&:=J_3+R_3 \\
E_4 &=\dfrac{1}{\abs{Y}}\iiint_{\Omega_{T}\times Y_{e}}\T_{\e}^{e}(\mathrm{M}_{e}^{\e}) \T_{\e}^{e}(\nabla u_{e,\e})\T_{\e}^{e}(\nabla \varphi_e) \ dxdydt
\\& \quad +\iint_{\Lambda_{e,\e,T}}\mathrm{M}_{e}^{\e}\nabla u_{e,\e}\cdot\nabla\varphi_e \ dxdt
\\&:=J_4+R_4
\end{align*}

Due to the form of $\mathrm{I}_{\ell,ion},$ we use the property \eqref{P_uo2_gap}-\eqref{P_uo4_gap} of Proposition \ref{prop_uo_gap} to obtain $\T_{\e}^{b,k}\left(\mathrm{I}_{\ell,ion}(\cdot)\right)=\mathrm{I}_{\ell,ion}\left( \T_{\e}^{b,k}(\cdot)\right)$ for $\ell=a,b$ and $k=1,2.$ Thus, we arrive to: 
\begin{align*}
E_5
&=\dfrac{1}{\abs{Y}}\sum \limits_{k=1,2}\iiint_{\Omega_{T}\times \Gamma^{k}}\T_{\e}^{b,k}\left(\mathrm{I}_{a,ion}(v_{\e}^{k})\right) \T_{\e}^{b,k}(\psi^{k})\ dxd\sigma_ydt+\sum \limits_{k=1,2}\iint_{\Gamma_{\e,T}^{k}\cap\Lambda_{\e,T}} \e\mathrm{I}_{a,ion}(v_{\e}^{k})\psi^{k} \ d\sigma_xdt
\\&=\dfrac{1}{\abs{Y}}\sum \limits_{k=1,2}\iiint_{\Omega_{T}\times \Gamma^{k}}\mathrm{I}_{a,ion}\left( \T_{\e}^{b,k}(v_{\e}^{k})\right) \T_{\e}^{b,k}(\psi^{k})\ dxd\sigma_ydt+\sum \limits_{k=1,2}\iint_{\Gamma_{\e,T}^{k}\cap\Lambda_{\e,T}} \e\mathrm{I}_{a,ion}(v_{\e}^{k})\psi^{k} \ d\sigma_xdt
\\& :=J_5+R_5\\
E_6
& =\dfrac{1}{\abs{Y}}\sum \limits_{k=1,2}\iiint_{\Omega_{T}\times \Gamma^{k}}\T_{\e}^{b,k}(\mathrm{I}_{b,ion}(w_{\e}^{k}))\T_{\e}^{b,k}(\psi^{k})\ dxd\sigma_ydt+\sum \limits_{k=1,2}\iint_{\Gamma_{\e,T}^{k}\cap\Lambda_{\e,T}} \e\mathrm{I}_{b,ion}(w_{\e}^{k})\psi^{k} \ d\sigma_xdt
\\& =\dfrac{1}{\abs{Y}}\sum \limits_{k=1,2}\iiint_{\Omega_{T}\times \Gamma^{k}}\mathrm{I}_{b,ion}\left( \T_{\e}^{b,k}(w_{\e}^{k})\right) \T_{\e}^{b,k}(\psi^{k})\ dxd\sigma_ydt+\sum \limits_{k=1,2}\iint_{\Gamma_{\e,T}^{k}\cap\Lambda_{\e,T}} \e\mathrm{I}_{b,ion}(w_{\e}^{k})\psi^{k} \ d\sigma_xdt
\\& :=J_6+R_6
\end{align*}
Similarly, we can rewrite the last two terms of \eqref{Fv_ie_gap} by taking account the form of $\I_{gap}$ as follows:
\begin{align*}
E_7
&=\dfrac{1}{2\abs{Y}}\iiint_{\Omega_{T}\times \Gamma^{1,2}}\I_{gap}\left(\T_{\e}^{b,1,2}(s_{\e})\right) \T_{\e}^{b,1,2}(\Psi)\ dxd\sigma_ydt+\frac{1}{2}\iint_{\Gamma_{\e,T}^{1,2}\cap\Lambda_{\e,T}} \e\I_{gap}(s_{\e})\Psi \ d\sigma_xdt
\\& :=J_7+R_7
\end{align*} 
\begin{align*}
E_8
&=\dfrac{1}{\abs{Y}}\sum \limits_{k=1,2}\iiint_{\Omega_{T}\times \Gamma^{k}}\T_{\e}^{b,k}(\I_{app,\e}^{k})\T_{\e}^{b,k}(\psi^{k})\ dxd\sigma_ydt+\sum \limits_{k=1,2}\iint_{\Gamma_{\e,T}^{k}\cap\Lambda_{\e,T}} \e\I_{app,\e}^{k}\psi^{k} \ d\sigma_xdt
\\& :=J_8+R_8
\end{align*} 

 Collecting the previous estimates, we readily obtain from \eqref{Fv_ie_gap} the following "unfolded" formulation:
  \begin{equation}
\begin{aligned}
& \dfrac{1}{\abs{Y}}\sum \limits_{k=1,2}\iiint_{\Omega_{T}\times \Gamma^{k}}\T_{\e}^{b,k}(\pt_t v_{\e}^{k})\T_{\e}^{b,k}(\psi^{k})\ dxd\sigma_ydt+\dfrac{1}{2\abs{Y}}\iiint_{\Omega_{T}\times \Gamma^{1,2}}\T_{\e}^{b,1,2}(\pt_t s_{\e})\T_{\e}^{b,1,2}(\Psi)\ dxd\sigma_ydt
\\ & \quad +\dfrac{1}{\abs{Y}}\sum \limits_{k=1,2}\iiint_{\Omega_{T}\times Y_{i}^{k}}\T_{\e}^{i,k}(\mathrm{M}_{i}^{\e}) \T_{\e}^{i,k}(\nabla u_{i,\e}^{k})\T_{\e}^{i,k}(\nabla \varphi_i^{k}) \ dxdydt
\\& \quad +\dfrac{1}{\abs{Y}}\iiint_{\Omega_{T}\times Y_{e}}\T_{\e}^{e}(\mathrm{M}_{e}^{\e}) \T_{\e}^{e}(\nabla u_{e,\e})\T_{\e}^{e}(\nabla \varphi_e) \ dxdydt
\\& \quad +\dfrac{1}{\abs{Y}}\sum \limits_{k=1,2}\iiint_{\Omega_{T}\times \Gamma^{k}}\mathrm{I}_{a,ion}\left( \T_{\e}^{b,k}(v_{\e}^{k})\right) \T_{\e}^{b,k}(\psi^{k})\ dxd\sigma_ydt
\\& \quad +\dfrac{1}{\abs{Y}}\sum \limits_{k=1,2}\iiint_{\Omega_{T}\times \Gamma^{k}}\mathrm{I}_{b,ion}\left( \T_{\e}^{b,k}(w_{\e}^{k})\right) \T_{\e}^{b,k}(\psi^{k})\ dxd\sigma_ydt
\\& \quad +\dfrac{1}{2\abs{Y}}\iiint_{\Omega_{T}\times \Gamma^{1,2}}\I_{gap}\left(\T_{\e}^{b,1,2}(s_{\e})\right) \T_{\e}^{b,1,2}(\Psi)\ dxd\sigma_ydt
\\&=\dfrac{1}{\abs{Y}}\sum \limits_{k=1,2}\iiint_{\Omega_{T}\times \Gamma^{k}}\T_{\e}^{b,k}(\I_{app,\e}^{k})\T_{\e}^{b,k}(\psi^{k})\ dxd\sigma_ydt
\\ & \quad +R_8-R_7-R_6-R_5-R_4-R_3-R_2-R_1
\end{aligned}
\label{Fv_ie_unf_gap}
\end{equation}

 Similarly, the "unfolded" formulation of \eqref{Fv_d_gap} is given by:
 \begin{equation}
\begin{aligned}
&\dfrac{1}{\abs{Y}}\iiint_{\Omega_{T}\times \Gamma^{k}}\T_{\e}^{b,k}(\pt_t w_{\e}^{k})\T_{\e}^{b,k}(e^{k})\ dxd\sigma_ydt
\\ & -\dfrac{1}{\abs{Y}}\iiint_{\Omega_{T}\times \Gamma^{k}}H(\T_{\e}^{b,k}(v_{\e}^{k}),\T_{\e}^{b,k}(w_{\e}^{k}))\T_{\e}^{b,k}(e^{k})\ dxd\sigma_ydt
\\& =-\e\iint_{\Gamma_{\e,T}^{k}\cap\Lambda_{\e,T}} \pt_t w_{\e}^{k}e^{k} \ d\sigma_xdt+\e \iint_{\Gamma_{\e,T}^{k}\cap\Lambda_{\e,T}}  H(v_{\e}^{k}, w_{\e}^{k}) e^{k} \ d\sigma_xdt
\\&:=R_9+R_{10}
\end{aligned}
\label{Fv_d_unf_gap}
\end{equation}

\subsection{Convergence of the unfolded formulation}\label{convergence unfolded formulation}
In this part, we pass to the limit in \eqref{Fv_ie_unf_gap}-\eqref{Fv_d_unf_gap}. First, we prove that:

\begin{equation*}
R_1,\cdots,R_{10} \underset{\e \rightarrow 0}{\longrightarrow} 0,
\end{equation*}
by making use of estimates \eqref{E_vw_gap}-\eqref{E_dtv_gap}. So, we prove that $R_3\rightarrow 0$ when $\e\rightarrow 0$ and the proof for the other terms is similar. 
 First, by  Cauchy-Schwarz inequality,
one has 
\begin{equation*}
\begin{aligned}
R_3=\sum \limits_{k=1,2}\iint_{\Lambda_{i,\e,T}^{k}}\mathrm{M}_{i}^{\e}\nabla u_{i,\e}^{k}\cdot\nabla\varphi_i^{k} \ dxdt\leq\sum \limits_{k=1,2}\norm{\mathrm{M}_{i}^{\e}\nabla u_{i,\e}^{k}}_{L^2\left(\Omega_{i,\e,T}^{k}\right)} \left(\iint_{\Lambda_{i,\e,T}^{k}} \abs{\nabla\varphi_i^{k}}^2 \ dxdt\right)^{1/2}. 
\end{aligned}
\end{equation*}
In addition, we observe that $\abs{\Lambda_{i,\e}^{k}}\rightarrow 0$ and $\nabla\varphi_i^{k} \in L^2(\Omega_{i,\e}^{k}).$ Consequently, by Lebesgue dominated convergence theorem, one gets for $k=1,2:$ 
\begin{equation*}
\iint_{\Lambda_{i,\e}^{k}} \abs{\nabla\varphi_i^{k}}^2\rightarrow 0, \text{ as } \e\rightarrow 0.
\end{equation*} 
Finally, by using Hölder's inequality, the result follows by making use of estimate \eqref{E_u_gap} and assumption \eqref{A_Mie_gap} on $\mathrm{M}_{i}^{\e}$. 
   
 Let us now elaborate the convergence results of $J_1,\cdots,J_8$. Using property \eqref{P_uo5_gap} of Proposition \ref{prop_uo_gap} and due to the regularity of test functions, we know that the following strong convergence hold:
 \begin{align*}
& \T_{\e}^{b,k}(\psi^{k}) \rightarrow \psi^{k} \text{ and } \T_{\e}^{b,k}(e^{k}) \rightarrow e^{k} \text{ strongly in } L^{2}(\Omega_T\times \Gamma^{k})
\\& \T_{\e}^{b,1,2}(\Psi) \rightarrow \Psi \text{ strongly in } L^{2}(\Omega_T\times \Gamma^{1,2}) 
 \end{align*}
and
\begin{align*}
 \T_{\e}^{i,k}(\varphi_{i}^{k})  \rightarrow \varphi_{i}^{k} &\text{ strongly in } L^{2}(\Omega_T\times Y_{i}^{k}),
\\ \T_{\e}^{e}(\varphi_e)  \rightarrow \varphi_e &\text{ strongly in } L^{2}(\Omega_T\times Y_{e}).
\end{align*}
Next, we want to use the a priori estimates \eqref{E_vw_gap}-\eqref{E_dtv_gap} to verify that the remaining terms of the equations in the unfolded formulation \eqref{Fv_ie_unf_gap}-\eqref{Fv_d_unf_gap} are weakly convergent. Using estimation \eqref{E_u_gap}, we deduce that there exist $u_{i}^{k},u_{e} \in L^{2}\left( 0,T; H^{1}(\Omega)\right),$ $\widehat{u}_{i}^{k} \in L^{2}\left( 0,T; L^{2}\left(\Omega, H_{\#}^{1}(Y_{i}^{k})\right)\right)$ for $k=1,2$ and $\widehat{u}_{e} \in L^{2}\left( 0,T; L^{2}\left(\Omega, H_{\#}^{1}(Y_{e})\right)\right)$  such that, up to a subsequence (see for instance Theorem 3.12 in \cite{doinaunf12}), the following convergences hold as $\e$ goes to zero:
\begin{align*}
&\T_{\e}^{i,k}(u_{i,\e}^{k})\rightharpoonup u_{i}^{k} \text{ weakly in } L^{2}\left( 0,T; L^{2}\left(\Omega \times Y_{i}^{k}\right)\right),
\\ & \T_{\e}^{i,k}(\nabla u_{i,\e}^{k}) \rightharpoonup \nabla u_{i}^{k}+\nabla_y \widehat{u}_{i}^{k} \text{ weakly in } L^{2}(\Omega_T\times Y_{i}^{k}),
\end{align*}
and
\begin{align*}
&\T_{\e}^{e}(u_{e,\e})\rightharpoonup u_e \text{ weakly in } L^{2}\left( 0,T; L^{2}\left(\Omega \times Y_{e}\right)\right),
\\ & \T_{\e}^{e}(\nabla u_{e,\e}) \rightharpoonup \nabla u_{e}+\nabla_y \widehat{u}_{e} \text{ weakly in } L^{2}(\Omega_T\times Y_{i}^{k}),
\end{align*}
with the space $H_{\#}^{1}$ given by \eqref{W_gap}.
Thus, since $\T_{\e}^{i,k}\left(\mathrm{M}_{i}^{\e}\right)\rightarrow \mathrm{M}_{i}$ a.e. in $\Omega\times Y_{i}^{k}$ for $k=1,2$ and $\T_{\e}^{e}\left(\mathrm{M}_{e}^{\e}\right) \rightarrow \mathrm{M}_{e}$ a.e. in $\Omega\times Y_{e},$ one obtains:
\begin{align*}
&J_3\underset{\e \rightarrow 0}{\longrightarrow}\dfrac{1}{\abs{Y}}\sum \limits_{k=1,2}\iiint_{\Omega_{T}\times Y_{i}^{k}}\mathrm{M}_{i} \left[ \nabla u_{i}^{k}+\nabla_y \widehat{u}_{i}^{k}\right] \nabla \varphi_{i}^{k} \ dxdydt,
\\&J_4\underset{\e \rightarrow 0}{\longrightarrow} \dfrac{1}{\abs{Y}}\iiint_{\Omega_{T}\times Y_{e}}\mathrm{M}_{e} \left[ \nabla u_{e}+\nabla_y \widehat{u}_{e}\right] \nabla \varphi_e \ dxdydt.
\end{align*}

 Furthermore, we need to establish the weak convergence of the unfolded sequences that corresponds to $v_{\e}^{k}, w_{\e}^{k}, s_\e$ and $\I_{app,\e}^{k}$ for $k=1,2.$ In order to establish the convergence of $\T_{\e}^{b,k}(\pt_t v_{\e}^{k}),$ we use estimation \eqref{E_dtv_gap} to get for $k=1,2$
 $$ \norm{\T_{\e}^{b,k}(\pt_t v_{\e}^{k})}_{L^{2}(\Omega_T \times \Gamma^{k})}\leq\e^{1/2}\abs{Y}^{1/2}\norm{\pt_t v_{\e}^{k}}_{L^{2}(\Gamma_{\e,T}^{k})}\leq C.$$
So there exists $V^{k} \in L^{2}\left(\Omega_T\right)$ such that $\T_{\e}^{b,k}(\pt_t v_{\e}^{k})\rightharpoonup V^{k}$  weakly in $L^{2}(\Omega_T \times \Gamma^{k})$ with $k=1,2.$ By a classical integration argument, one can show that $V^{k}=\pt_t v^{k}.$ Therefore, we deduce that
$$\T_{\e}^{b,k}(\pt_t v_{\e}^{k})\rightharpoonup \pt_t v^{k} \text{ weakly in } L^{2}(\Omega_T \times \Gamma^{k}).$$
Thus, we obtain
\begin{align*}
J_1&=\dfrac{1}{\abs{Y}}\sum \limits_{k=1,2}\iiint_{\Omega_{T}\times \Gamma^{k}} \T_{\e}^{b,k}(\pt_t v_{\e}^{k}) \T_{\e}^{b,k}(\psi^{k})\ dxd\sigma_ydt 
\\& \underset{\e \rightarrow 0}{\longrightarrow} \dfrac{1}{\abs{Y}}\sum \limits_{k=1,2}\iiint_{\Omega_{T}\times \Gamma^{k}}\pt_t v^{k} \psi^{k} \ dxd\sigma_ydt.
\end{align*}

By the same strategy for the convergence of $J_1,$ there exits $S \in L^{2}\left(\Omega_T\right)$ such that $\T_{\e}^{b,1,2}(\pt_t s_{\e})\rightharpoonup S$  weakly in $L^{2}(\Omega_T \times \Gamma^{1,2}).$ Similarly, we get $S=\pt_t s.$ Thus, one has 
\begin{align*}
& J_2=\dfrac{1}{2\abs{Y}}\iiint_{\Omega_{T}\times \Gamma^{1,2}} \T_{\e}^{b,1,2}(\pt_t s_{\e}) \T_{\e}^{b,1,2}(\Psi)\ dxd\sigma_ydt
\\ & \underset{\e \rightarrow 0}{\longrightarrow} \dfrac{1}{2\abs{Y}}\iiint_{\Omega_{T}\times \Gamma^{1,2}}\pt_t s \Psi \ dxd\sigma_ydt.
\end{align*}
Now, making use of estimate \eqref{E_vw_gap} with property \eqref{P_uo4_gap} of Proposition \ref{prop_uo_gap}, one has
\begin{align*}
&\norm{\T_{\e}^{b,k}(w_{\e}^{k})}_{L^{2}(\Omega_T \times \Gamma^{k})}\leq\e^{1/2}\abs{Y}^{1/2}\norm{w_{\e}^{k}}_{L^{2}(\Gamma_{\e,T}^{k})}\leq C,
\\& \norm{\T_{\e}^{b,1,2}(s_{\e})}_{L^{2}(\Omega_T \times \Gamma^{1,2})}\leq\e^{1/2}\abs{Y}^{1/2}\norm{s_{\e}}_{L^{2}(\Gamma_{\e,T}^{1,2})}\leq C.
\end{align*}
Then, up to a subsequences,
\begin{align*}
&\T_{\e}^{b,k}(w_{\e}^{k})\rightharpoonup w^{k} \text{ weakly in } L^{2}(\Omega_T \times \Gamma^{k}),
\\&\T_{\e}^{b,1,2}(s_{\e})\rightharpoonup s \text{ weakly in } L^{2}(\Omega_T \times \Gamma^{1,2}).
\end{align*}
So, by linearity of $\mathrm{I}_{b,ion}$ and of $\I_{gap}$ we have respectively:
\begin{align*}
&J_6=\dfrac{1}{\abs{Y}}\sum \limits_{k=1,2}\iiint_{\Omega_{T}\times \Gamma^{k}}\mathrm{I}_{b,ion}\left( \T_{\e}^{b,k}(w_{\e}^{k})\right) \T_{\e}^{b,k}(\psi^{k})\ dxd\sigma_ydt
\\ & \qquad \underset{\e \rightarrow 0}{\longrightarrow} \dfrac{1}{\abs{Y}}\sum \limits_{k=1,2}\iiint_{\Omega_{T}\times \Gamma^{k}}\mathrm{I}_{b,ion}(w^{k}) \psi^{k} \ dxd\sigma_ydt,
\\& J_7=\dfrac{1}{2\abs{Y}}\iiint_{\Omega_{T}\times \Gamma^{1,2}}\mathrm{I}_{gap}\left( \T_{\e}^{b,1,2}(s_{\e})\right) \T_{\e}^{b,1,2}(\Psi)\ dxd\sigma_ydt
\\ & \qquad \underset{\e \rightarrow 0}{\longrightarrow} \dfrac{1}{2\abs{Y}}\iiint_{\Omega_{T}\times \Gamma^{1,2}}\I_{gap}(s) \Psi \ dxd\sigma_ydt.
\end{align*}
Similarly, exploiting assumption \eqref{A_iapp_gap} on $\I_{app,\e}^{k}$, we obtain the following  convergence:
\begin{align*}
J_8&=\dfrac{1}{\abs{Y}}\sum \limits_{k=1,2}\iiint_{\Omega_{T}\times \Gamma^{k}}\T_{\e}^{b,k}(\I_{app,\e}^{k})\T_{\e}^{b,k}(\psi^{k})\ dxd\sigma_ydt
\\ & \underset{\e \rightarrow 0}{\longrightarrow} \dfrac{1}{\abs{Y}}\sum \limits_{k=1,2}\iiint_{\Omega_{T}\times \Gamma^{k}}\I_{app}^{k}\psi^{k}\ dxd\sigma_ydt.
\end{align*}

\begin{rem} Proceeding exactly as in \cite{BaderUnf}, we prove that the limits $v^{k}$ and $s$ coincide respectively with $u_i^{k}-u_e$ for $k=1,2$ and $u_i^{1}-u_i^{2}.$ Furthermore, since we have assumed that the initial data $v_{0,\e}^{k},w_{0,\e}^{k}$ for $k=1,2$ and $s_{0,\e}$ introduced in \eqref{cond_ini_vws_gap}, are also uniformly bounded in the adequate norm $($see assumption \eqref{A_vw0_gap}$)$. Then, using the weak formulation \eqref{Fv_ie_unf_gap}-\eqref{Fv_d_unf_gap}, we prove similarly that $v^{k}(0,x)=v_{0}^{k}(x)$ a.e. on $\Omega,$ since, by construction, $v_{\e}^{k}(0,x)=v_{0,\e}^{k}(x)$ a.e. on $\Gamma_{\e}^{k}$ for $k=1,2$. The same argument holds for the initial condition of $w_{\e}^{k}$ for $k=1,2$ and of $s_{\e}$.
\end{rem}

 It remains to obtain the limit of $J_5$ containing the ionic function $\mathrm{I}_{a,ion}.$ By the regularity of $\psi^{k}$, it sufficient to show the weak convergence of $\mathrm{I}_{a,ion}\left( \T_{\e}^{b,k}(v_{\e}^{k})\right) $ to $\mathrm{I}_{a,ion}(v^{k})$ in $L^2(\Omega_T\times
\Gamma^{k}).$  Due to the non-linearity of $\mathrm{I}_{a,ion},$ the weak convergence will not be enough. It is difficult to pass to the limit of this term on the microscopic membrane surface. 
 Therefore, we need the strong convergence of $\T_{\e}^{b,k}(v_{\e}^{k})$ to $v^{k}$ in $L^2(\Omega_T\times \Gamma^{k})$ for $k=1,2$ that we obtain by using Kolmogorov-Riesz type compactness criterion that can be found as Corollary 2.5 in \cite{maria}:
 \begin{prop}[Kolmogorov-Riesz type compactness result] Let $\Omega \subset \R^d$ be an open and bounded set. Let $F \subset L^{p}(\Omega,B)$ for a Banach space B and $p\in[1;+\infty).$ For $f\in F$ and $\xi\in\R^d,$ we define $\tau_{\xi}f(x):=f(x+\xi).$ Then $F$ is relatively compact in $L^{p}(\Omega,B)$ if and only if
\begin{itemize}
\item[$(i)$] for every measurable set $ A\subset \Omega$ the set $\lbrace \int_A f dx \ 
: \ f \in F \rbrace$ is relatively compact in $B,$
\item[$(ii)$] for all $\lambda>0,$ $\xi\in \R^d$ and $\xi_i\geq 0,$ $i=1,\dots,d,$ there holds
$$\underset{f\in F}{\sup} \norm{\tau_{\xi}f-f}_{L^p\left(\Omega_\lambda^\xi,B\right)}\rightarrow 0, \text{ for } h\rightarrow 0, $$
where $\Omega_\lambda^\xi:=\lbrace x\in \Omega_\lambda : x+\xi \in \Omega_\lambda\rbrace$ and $\Omega_\lambda:=\lbrace x\in \Omega : dist(x,\pt \Omega)>\lambda\rbrace,$
\item[$(iii)$] for $\lambda>0,$ there holds $\underset{f\in F}{\sup} \int_{\Omega\setminus\Omega_\lambda} \abs{f(x)}^p dx\rightarrow 0$ for $\lambda \rightarrow 0.$
\end{itemize}
\label{kolmo_gap}
\end{prop}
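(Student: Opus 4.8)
The plan is to run the classical Riesz--Fréchet--Kolmogorov argument, with two substitutions tailored to the Banach-valued setting: the Heine--Borel theorem (which in finite dimensions lets uniform boundedness alone do the work) is replaced by hypothesis $(i)$, and the usual lemma on continuity of translations in $L^p$ is replaced by hypothesis $(ii)$ made uniform over $F$. For the \emph{necessity} direction I would argue directly: for each measurable $A\subset\Omega$ the map $f\mapsto\int_A f\,dx$ is linear and, by Hölder, bounded from $L^p(\Omega,B)$ into $B$ with $\norm{\int_A f}_B\le\abs{A}^{1-1/p}\norm{f}_{L^p(\Omega,B)}$, so it sends the relatively compact set $F$ to a relatively compact subset of $B$, which is $(i)$; for $(ii)$ and $(iii)$, a relatively compact $F$ is totally bounded, so one fixes a finite $\eta$-net $g_1,\dots,g_m\in L^p(\Omega,B)$, observes $\norm{\tau_\xi g_j-g_j}_{L^p(\Omega,B)}\to0$ as $\xi\to0$ (density of continuous compactly supported functions) and $\norm{g_j}_{L^p(\Omega\setminus\Omega_\lambda,B)}\to0$ as $\lambda\to0$ (dominated convergence), and transfers these bounds uniformly to $F$ by the triangle inequality.

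For the \emph{sufficiency} direction (the substantive one), I would introduce for small $r>0$ the averaging operator $(\mathcal A_r f)(x):=\abs{B_r}^{-1}\int_{B_r}\bar f(x+y)\,dy$, where $\bar f$ is the extension of $f$ by zero to $\R^d$. \emph{Step 1:} for each $\lambda>0$ and $r<\lambda$, Minkowski's integral inequality gives $\norm{\mathcal A_r f-f}_{L^p(\Omega_\lambda,B)}\le\sup_{\abs{\xi}\le r}\norm{\tau_\xi f-f}_{L^p(\Omega_{\lambda-r}^{\xi},B)}$, which by $(ii)$ tends to $0$ as $r\to0$ uniformly over $f\in F$. \emph{Step 2:} fix $r>0$ and $\lambda>2r$; then $\{\mathcal A_r f|_{\overline{\Omega_\lambda}}:f\in F\}$ is relatively compact in $C(\overline{\Omega_\lambda},B)$ by the vector-valued Arzelà--Ascoli theorem. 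Indeed its pointwise ranges are relatively compact in $B$, since for $x\in\Omega_\lambda$ one has $x+B_r\subset\Omega$ and hence $(\mathcal A_r f)(x)=\abs{B_r}^{-1}\int_{x+B_r}f$ is $\abs{B_r}^{-1}$ times an integral of exactly the kind controlled by $(i)$; and it is $x$-equicontinuous uniformly over $F$ because, for $x,x'\in\Omega_\lambda$ close,
\[
\norm{(\mathcal A_r f)(x)-(\mathcal A_r f)(x')}_B\le\abs{B_r}^{-1}\norm{\tau_{x'-x}\bar f-\bar f}_{L^1(x+B_r,B)}\le\abs{B_r}^{-1/p}\,\norm{\tau_{x'-x}f-f}_{L^p(\Omega_{\lambda-r}^{x'-x},B)},
\]
and $(ii)$ forces the right-hand side to $0$ as $\abs{x-x'}\to0$, uniformly in $f$; since $\Omega_\lambda$ is bounded, compactness in $C(\overline{\Omega_\lambda},B)$ yields compactness in $L^p(\Omega_\lambda,B)$.

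\emph{Step 3:} given $\eta>0$, use $(iii)$ to pick $\lambda$ with $\sup_{f\in F}\norm{f}_{L^p(\Omega\setminus\Omega_\lambda,B)}<\eta$, then Step 1 to pick $r<\lambda/2$ with $\sup_{f\in F}\norm{\mathcal A_r f-f}_{L^p(\Omega_\lambda,B)}<\eta$, then Step 2 to cover $\{\mathcal A_r f|_{\Omega_\lambda}\}$ by finitely many $L^p(\Omega_\lambda,B)$-balls of radius $\eta$; extending the centers by zero outside $\Omega_\lambda$ and using the triangle inequality shows $F$ is covered by finitely many balls of radius $\lesssim\eta$ in $L^p(\Omega,B)$, so $F$ is totally bounded and therefore relatively compact by completeness of $L^p(\Omega,B)$.

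\emph{Main obstacle.} The delicate point is Step 2, namely making the $x$-equicontinuity of $\mathcal A_r f$ uniform in $f$: one must pass from the $L^1$-difference of translates over a ball of radius $r$ to the $L^p$-quantity that $(ii)$ controls, while keeping precise track of which shrunken domain $\Omega_\mu^\xi$ the ball sits inside so that $(ii)$ genuinely applies -- this is exactly why the hypotheses are phrased with the interior sets $\Omega_\lambda$ rather than with all of $\Omega$. A secondary bookkeeping nuisance is the boundary layer $\Omega\setminus\Omega_\lambda$: one must check that a finite net on $\Omega_\lambda$ upgrades to a finite net on all of $\Omega$ at the cost of one more $\eta$, which is precisely the role of $(iii)$.
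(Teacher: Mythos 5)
The paper does not prove this proposition at all: it is imported as Corollary 2.5 of \cite{maria} and used as a black box, so there is no in-paper argument to compare yours against. What you have written is the standard vector-valued Fr\'echet--Kolmogorov proof --- necessity via boundedness of $f\mapsto\int_A f\,dx$ and a finite $\eta$-net, sufficiency via the mollification $\mathcal A_r$, with $(i)$ supplying pointwise relative compactness of the averages, $(ii)$ supplying both $\norm{\mathcal A_r f-f}_{L^p(\Omega_\lambda,B)}\to 0$ and the equicontinuity needed for the vector-valued Arzel\`a--Ascoli theorem on $\overline{\Omega_\lambda}$, and $(iii)$ absorbing the boundary layer --- and this is essentially the argument of the cited source. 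The structure is sound and the domain bookkeeping ($x+B_r\subset\Omega_{\lambda-r}$, and $x+B_r\subset\Omega_{\lambda-r}^{x'-x}$ for $x,x'\in\Omega_\lambda$) is exactly the point that makes $(ii)$ applicable, as you note. Two small repairs: hypothesis $(ii)$ as stated only controls translations $\xi$ with nonnegative components, whereas your Steps 1--2 average over a centered ball and so invoke translations in arbitrary directions; either average over a cube $x+(0,r)^d$ lying in the positive orthant, or add the one-line reduction $\norm{\tau_{-\xi}f-f}_{L^p(\Omega_\lambda^{-\xi},B)}=\norm{\tau_{\xi}f-f}_{L^p(\Omega_\lambda^{\xi},B)}$ (change of variables) together with the splitting $\xi=\xi^{+}-\xi^{-}$ to treat general $\xi$. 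Also, in the necessity of $(ii)$ the translate $\tau_\xi g_j$ of a net element should be read on $\Omega_\lambda^\xi$ or after extension by zero so the triangle inequality is well posed. Neither point is a genuine gap.
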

 
To cope with this, in the following lemma, we derive the convergence of the nonlinear term $\mathrm{I}_{a,ion}:$
\begin{lem}
The following convergence holds for $k=1,2$:
\begin{equation*}
  \T_{\e}^{b,k}(v_{\e}^{k})\rightarrow v^{k} \text{ strongly in } L^2(\Omega_T\times \Gamma^{k}),
\end{equation*}
 as $\e\rightarrow 0.$ Moreover, we have for $k=1,2$:
\begin{equation*}
\mathrm{I}_{a,ion}\left( \T_{\e}^{b,k}(v_{\e}^{k})\right)\rightarrow \mathrm{I}_{a,ion}(v^{k})\text{ strongly in } L^q(\Omega_T\times \Gamma^{k}), \ \forall q\in[1,r/(r-1)),
\end{equation*}
as $\e\rightarrow 0.$
\end{lem}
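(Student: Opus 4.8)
The plan is to derive the asserted strong convergence of $\T_{\e}^{b,k}(v_{\e}^{k})$ from a Kolmogorov--Riesz compactness argument (Proposition \ref{kolmo_gap}), and then to obtain the convergence of the nonlinear term by extracting an a.e.\ convergent subsequence and applying the Vitali convergence theorem. The starting point is a pair of uniform bounds: combining property \eqref{P_uo4_gap} of Proposition \ref{prop_uo_gap} with the energy estimates \eqref{E_vw_gap} and \eqref{E_vr_gap} one gets, uniformly in $\e$,
\begin{equation*}
\norm{\T_{\e}^{b,k}(v_{\e}^{k})}_{L^{2}(\Omega_{T}\times\Gamma^{k})}\leq \abs{Y}^{1/2}\norm{\sqrt{\e}\,v_{\e}^{k}}_{L^{2}(\Gamma_{\e,T}^{k})}\leq C, \qquad \norm{\T_{\e}^{b,k}(v_{\e}^{k})}_{L^{r}(\Omega_{T}\times\Gamma^{k})}\leq \abs{Y}^{1/r}\norm{\e^{1/r}v_{\e}^{k}}_{L^{r}(\Gamma_{\e,T}^{k})}\leq C .
\end{equation*}
The second bound, with $r>2$, provides the equi-integrability needed for conditions $(i)$ and $(iii)$ of Proposition \ref{kolmo_gap}; for $(i)$ one also uses that $\nabla_{y}\T_{\e}^{i,k}(u_{i,\e}^{k})=\e\,\T_{\e}^{i,k}(\nabla u_{i,\e}^{k})$ is bounded in $L^{2}(\Omega_{T}\times Y_{i}^{k})$ by \eqref{E_u_gap}, so that $\T_{\e}^{i,k}(u_{i,\e}^{k})$ is bounded in $L^{2}(0,T;L^{2}(\Omega;H^{1}(Y_{i}^{k})))$ and, by Remark \ref{trace_ineq_gap} and Rellich's theorem on the compact manifold $\Gamma^{k}$, the averages of $\T_{\e}^{b,k}(v_{\e}^{k})$ remain in a fixed compact subset of $L^{2}(\Gamma^{k})$.

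The core of the argument is condition $(ii)$, the equicontinuity of translations uniformly in $\e$. For translations in the time variable this is easy: $\T_{\e}^{b,k}$ acts only on the space variable and hence commutes with time translations, so for $h>0$, writing $v_{\e}^{k}(t+h,\cdot)-v_{\e}^{k}(t,\cdot)=\int_{t}^{t+h}\pt_{\tau}v_{\e}^{k}\,d\tau$ and using Cauchy--Schwarz together with property \eqref{P_uo4_gap},
\begin{equation*}
\norm{\T_{\e}^{b,k}(v_{\e}^{k})(\cdot+h,\cdot,\cdot)-\T_{\e}^{b,k}(v_{\e}^{k})}_{L^{2}(\Omega_{T-h}\times\Gamma^{k})}^{2}\leq \abs{Y}\,\e\!\iint_{\Gamma_{\e,T-h}^{k}}\!\abs{v_{\e}^{k}(t+h,x)-v_{\e}^{k}(t,x)}^{2}d\sigma_{x}dt\leq \abs{Y}\,h^{2}\norm{\sqrt{\e}\,\pt_{t}v_{\e}^{k}}_{L^{2}(\Gamma_{\e,T}^{k})}^{2},
\end{equation*}
which tends to $0$ as $h\to0$, uniformly in $\e$, by \eqref{E_dtv_gap}. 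The main obstacle is the equicontinuity with respect to \emph{spatial} translations, since $v_{\e}^{k}$ carries no gradient bound on the microscopic surface. To bypass this I would use Remark \ref{trace_ineq_gap} to write $\T_{\e}^{b,k}(v_{\e}^{k})=\T_{\e}^{i,k}(u_{i,\e}^{k})\big\vert_{\Gamma^{k}}-\T_{\e}^{e}(u_{e,\e})\big\vert_{\Gamma^{k}}$ and reduce to estimating spatial translations of $\T_{\e}^{i,k}(u_{i,\e}^{k})$ and $\T_{\e}^{e}(u_{e,\e})$, for which the bulk bounds \eqref{E_u_gap} are available. The key observation is that for a translation $\xi$ belonging to the scaled lattice the unfolding operator commutes with the translation, $\T_{\e}^{i,k}(w)(\cdot,\cdot+\xi,\cdot)=\T_{\e}^{i,k}(\tau_{\xi}w)$ on $\widehat{\Omega}_{\e}$, so the translation of $\T_{\e}^{i,k}(u_{i,\e}^{k})$ is controlled by $\norm{\tau_{\xi}u_{i,\e}^{k}-u_{i,\e}^{k}}_{L^{2}}\leq\abs{\xi}\,\norm{\nabla u_{i,\e}^{k}}_{L^{2}}\leq C\abs{\xi}$; a general $\xi$ is decomposed into a lattice part plus a remainder of length $O(\e)$, the remainder contributing at most $C\e\,\norm{\nabla u_{i,\e}^{k}}_{L^{2}}$ after a change of variables that turns the thin set of cells crossed by the translation into a genuine $L^{2}$ modulus of continuity of $u_{i,\e}^{k}$ at scale $\e$. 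Passing from these interior estimates to the trace on $\Gamma^{k}$ is done with the trace inequality of Remark \ref{trace_ineq_gap}, which only adds the harmless term $\e\,\norm{\T_{\e}^{i,k}(\nabla u_{i,\e}^{k})}_{L^{2}}\leq C\e$; altogether $\sup_{\e}\norm{\T_{\e}^{b,k}(v_{\e}^{k})(\cdot,\cdot+\xi,\cdot)-\T_{\e}^{b,k}(v_{\e}^{k})}_{L^{2}}\to0$ as $\abs{\xi}\to0$.

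With the three conditions of Proposition \ref{kolmo_gap} verified, $\{\T_{\e}^{b,k}(v_{\e}^{k})\}_{\e}$ is relatively compact in $L^{2}(\Omega_{T}\times\Gamma^{k})$; since it also converges weakly to $v^{k}$ (the weak trace limit of $\T_{\e}^{i,k}(u_{i,\e}^{k})-\T_{\e}^{e}(u_{e,\e})$, which has already been identified with $u_{i}^{k}-u_{e}$), the limit is uniquely determined and the whole sequence converges strongly to $v^{k}$. For the second statement I would pass to a subsequence along which $\T_{\e}^{b,k}(v_{\e}^{k})\to v^{k}$ a.e.\ on $\Omega_{T}\times\Gamma^{k}$; since $\mathrm{I}_{a,ion}\in C^{1}(\R)$ is continuous, $\mathrm{I}_{a,ion}(\T_{\e}^{b,k}(v_{\e}^{k}))\to\mathrm{I}_{a,ion}(v^{k})$ a.e., while the growth bound \eqref{A_I_ab} gives $\abs{\mathrm{I}_{a,ion}(\T_{\e}^{b,k}(v_{\e}^{k}))}^{q}\leq C\big(\abs{\T_{\e}^{b,k}(v_{\e}^{k})}^{q(r-1)}+1\big)$ with $q(r-1)<r$ for every $q<r/(r-1)$, so this family is bounded in $L^{r/(q(r-1))}(\Omega_{T}\times\Gamma^{k})$ with exponent $>1$ by the uniform $L^{r}$ bound above, hence equi-integrable. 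The Vitali convergence theorem then yields $\mathrm{I}_{a,ion}(\T_{\e}^{b,k}(v_{\e}^{k}))\to\mathrm{I}_{a,ion}(v^{k})$ strongly in $L^{q}(\Omega_{T}\times\Gamma^{k})$, and uniqueness of the limit promotes the convergence to the full sequence. The only genuinely delicate ingredient is the spatial translation estimate; the remaining steps are routine combinations of the a priori bounds with the properties of the unfolding operators.
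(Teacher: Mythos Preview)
Your proof is correct and follows the same overall scaffold as the paper (Kolmogorov--Riesz compactness for the strong $L^{2}$ convergence, then a.e.\ convergence plus Vitali for the nonlinear term), but the two implementations differ in two places worth noting. First, you apply Proposition~\ref{kolmo_gap} with base domain $(0,T)\times\Omega$ and $B=L^{2}(\Gamma^{k})$, so condition~$(i)$ is handled by the bound on $\nabla_{y}\T_{\e}^{i,k}(u_{i,\e}^{k})$ together with Rellich on $\Gamma^{k}$, while time translations become part of~$(ii)$. The paper instead takes $B=L^{2}\bigl(0,T;L^{2}(\Gamma^{k})\bigr)$ with base domain $\Omega$; there condition~$(i)$ requires an Aubin--Lions argument combining the $H^{1/2}(\Gamma^{k})$ bound with the estimate on $\partial_{t}v_{\e}^{k}$, and no time translations appear in~$(ii)$. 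Second, for the spatial translation estimate the paper does \emph{not} argue directly from the trace inequality; it tests the variational formulation \eqref{Fv_ie_gap} with $\varphi_{i}^{k}=\eta^{2}(\tau_{\e\ell}u_{i,\e}^{k}-u_{i,\e}^{k})$ and $\varphi_{e}=\eta^{2}(\tau_{\e\ell}u_{e,\e}-u_{e,\e})$ and derives $\e\norm{v_{\e}^{k}(\cdot,\cdot+\e\ell)-v_{\e}^{k}}_{L^{2}(\Gamma_{\e,Q}^{k})}^{2}\leq C\e\abs{\ell}$ via Gronwall. Your route---extend $u_{i,\e}^{k}$ and $u_{e,\e}$, use $\norm{\tau_{\xi}u-u}_{L^{2}}\leq\abs{\xi}\norm{\nabla u}_{L^{2}}$, then the trace inequality of Remark~\ref{trace_ineq_gap}---is more elementary and yields the same $O(\abs{\xi}+\e)$ bound, at the cost of relying implicitly on a uniform extension operator for the perforated domains. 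Both approaches leave an $O(\e)$ remainder that prevents the estimate from being uniform in $\e$ as $\abs{\xi}\to 0$; the paper closes this by observing that only finitely many $\e$ in the given sequence lie above any fixed threshold and treating those by the $L^{2}$ continuity of translations for each fixed $\e$. You omit this step, which is the one point where your sketch should be made explicit.
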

\begin{proof}
We follow the same idea to the proof of Lemma 5.3 in \cite{bendunf19}.
The proof of the first convergence is based on the Kolmogorov compactness criterion \ref{kolmo_gap}. So, we want to verify that the sequence $\lbrace\T_{\e}^{b,k}(v_{\e}^{k})\rbrace_{\e>0}$  of unfolded membrane potentials satisfies the assumptions of Proposition \ref{kolmo_gap} with $B=L^2\left( 0,T;L^{2}(\Gamma^{k})\right)$ for $k=1,2$ and $p=2$. It is carried out by proving three conditions:

 $\textbf{(i)}$ Let $A\subset\Omega$ a measurable set. We define the sequence $\lbrace v_{A,\e}^{k} \rbrace_{\e>0}$ as follows:
\begin{equation*}
v_{A,\e}^{k}(t,y):=\int_{A} \T_{\e}^{b,k}(v_{\e}^{k})(t,x,y) \ dx, \text{ for a.e. } (t,y)\in (0,T) \times \Gamma^{k} \ (k=1,2).
\end{equation*} 
 It remains to show that the sequence $v_{A,\e}^{k} \in L^2\left( 0,T;H^{1/2}(\Gamma^{k})\right)$ is relatively compact in the space $L^2\left( 0,T;L^{2}(\Gamma^{k})\right)$ for $k=1,2$. Since the embedding $H^{1/2}(\Gamma^{k}) \hookrightarrow L^{2}(\Gamma^{k})$ is compact, we have to show that the sequence $v_{A,\e}^{k}$ is bounded in $L^2\left( 0,T;H^{1/2}(\Gamma^{k})\right) \cap H^1\left( 0,T;L^{2}(\Gamma^{k})\right)$ with $k=1,2.$\\
We first observe that for $k=1,2$
 \begin{equation*}
 \begin{aligned}
 \norm{v_{A,\e}^{k}}_{H^{1/2}(\Gamma^{k})}^2 
 & = \displaystyle \int_{\Gamma^{k}}\card{\int_{A} \T_{\e}^{b,k}(v_{\e}^{k})(t,x,y) \ dx}^2 d\sigma_y
 \\ & \quad +\displaystyle \iint_{\Gamma^{k}\times \Gamma^{k}} \int_{A} \dfrac{\card{ \T_{\e}^{b,k}(v_{\e}^{k})(t,x,y_1)-\T_{\e}^{b,k}(v_{\e}^{k})(t,x,y_2)}^2}{\card{ y_1-y_2 }^{d+1}} \ dxd\sigma_{y_1}d\sigma_{y_2}
 \\& :=\norm{v_{A,\e}^{k}}_{L^{2}(\Gamma^{k})}^2+ \norm{v_{A,\e}^{k}}_{H^{1/2}_0(\Gamma^{k})}^2.
 \end{aligned}
 \end{equation*}
In view of Fubini theorem, Cauchy-Schwarz inequality and estimate \eqref{E_vw_gap}, it follows that for $k=1,2$
 \begin{align*}
 \norm{v_{A,\e}^{k}}_{L^{2}(\Gamma^{k}_T)}^2 
 &\leq C \displaystyle \int_{0}^{T}\int_{\Omega}\int_{\Gamma^{k}}\card{ \T_{\e}^{b,k}(v_{\e}^{k})(t,x,y) }^2 d\sigma_ydxdt
 \\& \leq C \norm{\sqrt{\e}v_{\e}^{k}}_{L^{2}(\Gamma_{\e,T}^{k})}^2 \leq C.
 \end{align*}
 Next, we only need to bound the $H^{1/2}_0$ semi-norm and this is done as follows. Since $v_\e=\left( u_{i}^{\e}-u_{e}^{\e}\right){\vert \Gamma_{\e}^{k}}$ for $k=1,2,$ we use again Fubini theorem and Jensen inequality together with the trace inequality in Remark \ref{trace_ineq_gap} to obtain
 \begin{align*}
\norm{v_{A,\e}^{k}}_{H^{1/2}_0(\Gamma^{k})}^2
&\leq C \left[\int_{\Omega}\norm{\T_{\e}^{b,k}(v_{\e}^{k})}_{H^{1/2}_0(\Gamma^{y})}^2dxdt\right] 
\\ & \leq C\left[ \norm{u_{i,\e}^{k}}_{L^{2}(\Omega_{i,\e}^{k})}^2+\e^2\norm{\nabla u_{i,\e}^{k}}_{L^{2}(\Omega_{i,\e}^{k})}^2+\norm{u_{e,\e}}_{L^{2}(\Omega_{e,\e})}^2+\e^2\norm{\nabla u_{e,\e}}_{L^{2}(\Omega_{e,\e})}^2\right].
 \end{align*}
 Hence, integrating over $(0,T)$ and using the a priori estimates \eqref{E_u_gap}, we have showed that the sequence $v_{A,\e}^{k}$ is bounded in $L^2\left( 0,T;H^{1/2}(\Gamma^{k})\right)$ for $k=1,2.$\\
By a similar argument and making use of the estimate \eqref{E_dtv_gap} on $\e^{1/2}\pt_t v_\e^{k}$, we can also show that
\begin{equation*}
\norm{\pt_t v_{A,\e}^{k}}_{L^{2}(\Gamma^{k}_T)} \leq C, \text{ with } k=1,2.
\end{equation*}
Finally, we deduce that the sequence $v_{A,\e}^{k}$ is bounded in $L^2\left( 0,T;H^{1/2}(\Gamma^{k})\right) \cap H^1\left( 0,T;L^{2}(\Gamma^{k})\right)$ and due to the Aubin-Lions Lemma the sequence is relatively compact in $L^2\left( 0,T;L^{2}(\Gamma^{k})\right)$ with $k=1,2.$
 
 $\textbf{(ii)}$ Due to the decomposition of the domain given in Subsection \ref{unfop_gap}, $\Omega$ can always be represented by a union of scaled and translated reference cells. Fix $\e>0$ and let $k \in \Xi_\e,$ be an index set such that
\begin{equation*}
\widehat{\Omega}^{\e}= \underset{h \in \Xi_\e }{\bigcup} \e (h_{\ell} + Y), \text{ with } h_\ell:=( h_1\ell^\text{mes}_1,\dots,  h_d \ell^\text{mes}_d ).
\end{equation*}
Note that $x \in \e (h_{\ell} +Y) \Leftrightarrow \left[ \dfrac{x}{\e} \right]_Y=h_{\ell}.$ For every fixed $h \in \Xi_\e,$ we subdivide the cell $\e (h_{\ell} +Y)$ into subsets $\e \left(h_{\ell} +Y\right)^\sigma$ with $\sigma\in\left\lbrace 0,1 \right\rbrace^d,$ defined as follows
\begin{equation*}
\e (k_{\ell} +Y)^\sigma :=\left\lbrace x \in \e (k_{\ell} +Y)  : \e\left[\dfrac{x+\e\left\lbrace  \dfrac{\xi}{\e} \right\rbrace_Y}{\e}\right]_Y =\e (h_{\ell} +\sigma) \right\rbrace,
\end{equation*}
for a given $\xi \in \R^d.$ It holds $\e (h_{\ell} +Y)=\underset{\sigma\in\left\lbrace 0,1 \right\rbrace^d }{\bigcup}\e (h_{\ell} +Y)^\sigma.$\\
We use the same notation as in Proposition \ref{kolmo_gap}. Now, we compute for $k=1,2$ the following norm
\begin{align*}
\norm{\tau_\xi\T_{\e}^{b,k}(v_{\e}^{k})-\T_{\e}^{b,k}(v_{\e}^{k})}_{L^{2}\left((0,T)\times \Omega^{\xi}_{\lambda} \times \Gamma^{k}\right)}^2
&=\norm{\tau_\xi\T_{\e}^{b,k}(v_{\e}^{k})-\T_{\e}^{b,k}(v_{\e}^{k})}_{L^{2}\left((0,T)\times (\Omega^{\xi}_{\lambda}\cap \widehat{\Omega}^{\e}) \times \Gamma^{k}\right)}^2
\\& \quad +\norm{\tau_h\T_{\e}^{b,k}(v_{\e}^{k})-\T_{\e}^{b,k}(v_{\e}^{k})}_{L^{2}\left((0,T)\times (\Omega^{\xi}_{\lambda}\setminus\widehat{\Omega}^{\e}) \times \Gamma^{k}\right)}^2
\\& := E_{a,\e}^{\xi}+E_{b,\e}^{\xi}.
\end{align*}
Proceeding in a similar way to \cite{soren,maria07}, we first estimate $E_{1,\e}^{\xi}$ using the above decomposition of the domain as follows:
\begin{align*}
E_{a,\e}^{\xi}& =\sum\limits_{h \in \Xi_\e}\int_{0}^{T} \int_{\e (h_{\ell} +Y)} \int_{\Gamma^{k}} \card{v_{\e}^{k}\left(t,\e\left[\dfrac{x+\xi}{\e} \right]_Y+\e y  \right)-v_{\e}^{k}\left(t,\e\left[\dfrac{x}{\e} \right]_Y+\e y  \right) }^2 d\sigma_ydxdt
\\ & =\sum\limits_{h \in \Xi_\e}\sum\limits_{\sigma\in\left\lbrace 0,1 \right\rbrace^d }\int_{0}^{T} \int_{\e (h_{\ell} +Y)^\sigma} \int_{\Gamma^{k}} \card{v_{\e}^{k}\left(t,\e\left( h_{\ell}+\sigma+\left[\dfrac{\xi}{\e} \right]_Y\right)+\e y  \right)-v_{\e}^{k}\left(t,\e h_{\ell}+\e y  \right) }^2 d\sigma_ydxdt
\\ & \leq\sum\limits_{h \in \Xi_\e}\sum\limits_{\sigma\in\left\lbrace 0,1 \right\rbrace^d }\int_{0}^{T} \int_{\e (h_{\ell} +Y)} \int_{\Gamma^{k}} \card{v_{\e}^{k}\left(t,\e\left( h_{\ell}+\sigma+\left[\dfrac{\xi}{\e} \right]_Y\right)+\e y  \right)-v_{\e}^{k}\left(t,\e h_{\ell}+\e y  \right) }^2 d\sigma_ydxdt
\\ & \leq \sum\limits_{\sigma\in\left\lbrace 0,1 \right\rbrace^d }\int_{0}^{T} \int_{\widehat{\Omega}^{\e}} \int_{\Gamma^{k}} \card{\T_{\e}^{b,k}v_{\e}^{k}\left(t,x+\e\left( \sigma+\left[\dfrac{\xi}{\e} \right]_Y\right), y  \right)-\T_{\e}^{b,k}v_{\e}^{k}\left(t, x, y \right) }^2 d\sigma_ydxdt,
\end{align*} 
which by using the integration formula \eqref{P_uo4_gap} $($for $p=2)$ of Proposition \ref{prop_uo_gap} is equal to
\begin{equation*}
\sum\limits_{\sigma\in\left\lbrace 0,1 \right\rbrace^d }\e  \abs{Y}\int_{0}^{T}  \int_{\Gamma_{\e}^{k}} \card{v_{\e}^{k}\left(t,x+\e\left( \sigma+\left[\dfrac{\xi}{\e} \right]_Y\right)  \right)-v_{\e}^{k}\left(t, x \right) }^2 d\sigma_ydt.
\end{equation*}

For a given small $\gamma>0,$ we can choose an $\e$ small enough such that $\card{\e \sigma+\e\left[\dfrac{\xi}{\e} \right]_Y}< \gamma.$ This amounts to saying that in order to estimate $E_{a,\e}^{\xi},$ it is sufficient to obtain estimates for given $\ell\in \mathbb{Z}^d,$ $\abs{\e \ell}<\gamma$ of 
\begin{equation}
\norm{v_{\e}^{k}\left(t,x+\e\ell\right)-v_{\e}^{k}\left(t, x \right)}_{L^{2}\left((0,T) \times \Gamma_{\e,Q}^{k}\right)}^2,
\label{E_trans_v_gap}
\end{equation}
where $\Gamma_{\e,Q}=\Gamma_{\e}\cap Q$ with $Q\subset \Omega$ an open set. 
 
 In order to estimate the norm \eqref{E_trans_v_gap}, we test the variational equation the weak formulation \eqref{Fv_ie_gap} with $\varphi_{i}^{k}=\eta^2\left( \tau_{\e\ell}u_{i,\e}^{\e}-u_{i,\e}^{k}\right) $ for $k=1,2$ and $\varphi_{e}=\eta^2\left( \tau_{\e\ell}u_{e,\e}-u_{e,\e}\right),$ where $\eta \in D(Q)$ is a cut-off function with $0\leq \eta \leq 1,$ $\eta=1 $ in $Q$ and zero outside a small neighborhood $Q'$ of $Q.$ Proceeding exactly as Lemma 5.2 in \cite{bendunf19}, Gronwall's inequality and the assumptions on the initial data give the following result:
 $$\e\norm{v_\e\left(t,x+\e\ell\right)-v_\e\left(t, x \right)}_{L^{2}\left((0,T) \times \Gamma_{\e,Q}\right)}^2\leq C\e\abs{\ell},$$
 where $C$ is a positive constant.
Then, we obtain by using the previous estimate
\begin{equation}
E_{a,\e}^{\xi}\leq C\left(\abs{\xi}+\e\right).
\label{E_ae_h}
\end{equation}
Hence, we can deduce that $E_{a,\e}^{h}\rightarrow 0$ as $\xi\rightarrow 0$ uniformly in $\e$, as in \cite{mariahom}. Indeed, to prove that
\begin{equation}
\forall \rho>0, \exists \mu>0 \text{ such that for every } \e \text{ tends to } 0^{+}, \ \forall \xi, \ \abs{\xi}\leq \mu \Rightarrow E_{a,\e}^{\xi}<\rho,
\label{E_a}
\end{equation}
one identifies two cases:
\begin{itemize}
\item[$(a)$] For $0<\e<\dfrac{\rho}{2C}:$ take $\mu=\dfrac{\rho}{2C},$ then, from \eqref{E_ae_h}, we get that condition \eqref{E_a} holds for $\abs{\xi}\leq \mu.$ 
\item[$(b)$] For $\dfrac{\rho}{2C}<\e < 1:$  we remark that since $\e$ tends to $0^{+}$,  there are only finitely many elements $\e$ in the interval $(\frac{\rho}{2C},1),$ say $\lbrace \e_n \rbrace_{n=1}^{N}$ with $N\in \mathbb{N},$ $N<\infty.$ Moreover, by the continuity of translations in the mean of $L^2$-functions, for every $n,$ $\exists \mu_n=\mu(\e_n)$ such that $\forall \xi, \ \abs{\xi}\leq \mu_n,$ condition \eqref{E_a} holds. Thus choosing $\mu=\min\lbrace\frac{\rho}{2C}, \mu_1, \dots, \mu_N\rbrace$ together with the argument for the translation with respect to time, property \eqref{E_a} is proved. 
\end{itemize}

It easy to check that $$E_{b,\e}^{\xi}=\norm{\tau_\xi\T_{\e}^{b,k}(v_{\e}^{k})}_{L^{2}\left((0,T)\times (\Omega^{\xi}_{\lambda}\setminus\widehat{\Omega}^{\e}) \times \Gamma^{k}\right)}^2\leq \norm{\tau_\xi\T_{\e}^{b,k}(v_{\e}^{k})}_{L^{2}\left((0,T)\times (\Omega_{\lambda}\setminus\widehat{\Omega}^{\e}) \times \Gamma^{k}\right)}^2.$$
Hence, we can deduce that $E_{b,\e}^{\xi}\rightarrow 0$ as $\xi\rightarrow 0$ uniformly in $\e.$ Indeed, to prove that
\begin{equation}
\forall \rho>0, \exists \mu>0 \text{ such that } \forall \e>0, \ \forall \xi, \ \abs{\xi}\leq \mu \Rightarrow E_{b,\e}^{\xi}<\rho,
\label{E_b}
\end{equation}
one identifies two cases:
\begin{itemize}
\item[$(a)$] For $\e$ small enough, say $\e<\e_0,$ $\Omega_{\lambda}\subset\widehat{\Omega}^{\e},$ then $E_{b,\e}^{\xi}=0.$
\item[$(b)$] For $\e_0<\e < 1:$  we remark that since $\e$ tends to $0^{+}$,  there are only finitely many elements $\e$ in the interval $(\e_0,1),$ say $\lbrace \e_n \rbrace_{n=1}^{N}$ with $N\in \mathbb{N},$ $N<\infty.$ Moreover, by the continuity of translations in the mean of $L^2$-functions, for every $n,$ $\exists \mu_n=\mu(\e_n)$ such that $\forall \xi, \ \abs{\xi}\leq \mu_n,$ condition \eqref{E_b} holds. Thus choosing $\mu=\min\lbrace\frac{\rho}{2C}, \mu_1, \dots, \mu_N\rbrace$ together with the argument for the translation with respect to time, property \eqref{E_b} is proved.
\end{itemize}
This ends the proof of the condition (ii) in Proposition \ref{kolmo_gap}.

$\textbf{(iii)}$ The last condition follows from the a priori estimate \eqref{E_vr_gap}. Indeed, we have for $k=1,2$:
\begin{equation*}
\int_{0}^{T}\int_{\Omega\setminus\Omega_{\lambda}}\card{\T_{\e}^{b,k}(v_{\e}^{k})}^2 dxdt \leq \abs{\Omega\setminus\Omega_{\lambda}}^{\frac{r-2}{r}}\left(\int_{\Omega_T}\card{\T_{\e}^{b,k}(v_{\e}^{k})}^r dxdt \right)^{\frac{2}{r}}\leq C \abs{\Omega\setminus\Omega_{\lambda}}^{\frac{r-2}{r}}.
\end{equation*}
The conditions (i)-(iii) imply that the Kolmogorov criterion for $\T_{\e}^{b,k}(v_\e)$ holds true in $L^{2}(\Omega_T\times \Gamma^{k})$ for $k=1,2.$ This concludes the proof of the first convergence in our Lemma.

 It remains to prove the second convergence which will be done as follows. Note that from the structure of $\mathrm{I}_{a,ion}$ and using property \eqref{P_uo2_gap} in Proposition \ref{prop_uo_gap}, we have
\begin{equation*}
\T_{\e}^{b,k}\left(\mathrm{I}_{a,ion}(v_{\e}^{k})\right)=\mathrm{I}_{a,ion}\left( \T_{\e}^{b,k}(v_{\e}^{k})\right), \text{ for  } k=1,2.
\end{equation*}
Due to the strong convergence of $\T_{\e}^{b,k}(v_{\e}^{k})$ in $L^2(\Omega_T\times \Gamma^{y}),$ we can extract a subsequence, such that $\T_{\e}^{b,k}(v_{\e}^{k})\rightarrow v^{k}$ a.e. in $\Omega_T\times \Gamma^{k}$ with $k=1,2.$ Since $\mathrm{I}_{a,ion}$ is continuous, we have
\begin{equation*}
\mathrm{I}_{a,ion}\left( \T_{\e}^{b,k}(v_{\e}^{k})\right)\rightarrow \mathrm{I}_{1,ion}(v^{k}) \text{ a.e. in } \Omega_T\times \Gamma^{y}.
\end{equation*}
Further, we use estimate \eqref{E_vr_gap} with property \eqref{P_uo4_gap} of Proposition \ref{prop_uo_gap} to obtain for $k=1,2$ 
$$ \norm{\T_{\e}^{b,k}\left(\mathrm{I}_{a,ion}(v_{\e}^{k})\right)}_{L^{r/(r-1)}\left(\Omega_T \times \Gamma^{y}\right)}\leq\abs{Y}^{(r-1)/r}\norm{\e^{(r-1)/r}\mathrm{I}_{a,ion}(v_{\e}^{k})}_{L^{r/(r-1)}(\Gamma_{\e,T})}\leq C.$$
Hence, using a classical result (see Lemma 1.3 in \cite{lions1969}):
\begin{equation*}
\mathrm{I}_{a,ion}\left( \T_{\e}^{b,k}(v_{\e}^{k})\right)\rightharpoonup\mathrm{I}_{a,ion}(v^{k})\text{ weakly in } L^{r/(r-1)}(\Omega_T\times \Gamma^{k}) \text{ with } k=1,2.
\end{equation*}
Moreover, we obtain, using Vitali's Theorem, the strong convergence of $\mathrm{I}_{a,ion}\left( \T_{\e}^{b,k}(v_{\e}^{k})\right)$ to $\mathrm{I}_{a,ion}(v^{k})$ in $L^q(\Omega_T\times \Gamma^{k}), \ \forall q\in[1,r/(r-1))$ and $k=1,2.$ This finishes the proof of our Lemma.
\end{proof}
Finally, we pass to the limit when $\e\rightarrow 0$ in the unfolded formulation \eqref{Fv_ie_unf_gap} to obtain the following limiting problem:
\begin{equation}
\begin{aligned}
&\dfrac{1}{\abs{Y}}\sum \limits_{k=1,2} \iiint_{\Omega_{T}\times \Gamma^{k}} \pt_t v^{k} \psi^{k} \ dxd\sigma_ydt+\dfrac{1}{2\abs{Y}}\iiint_{\Omega_{T}\times \Gamma^{1,2}} \pt_t s \Psi \ dxd\sigma_ydt
\\& \quad +\dfrac{1}{\abs{Y}}\sum \limits_{k=1,2}\iiint_{\Omega_{T}\times Y_{i}^{k}}\mathrm{M}_{i} \left[ \nabla u_{i}^{k}+\nabla_y \widehat{u}_{i}^{k}\right] \nabla \varphi_{i}^{k} \ dxdydt
\\&\quad +\dfrac{1}{\abs{Y}}\iiint_{\Omega_{T}\times Y_{e}}\mathrm{M}_{e} \left[ \nabla u_{e}+\nabla_y \widehat{u}_{e}\right] \nabla \varphi_e \ dxdydt
\\& \quad +\dfrac{1}{\abs{Y}}\sum \limits_{k=1,2}\iiint_{\Omega_{T}\times \Gamma^{k}}\mathrm{I}_{a,ion}(v^{k}) \psi^{k} \ dxd\sigma_ydt+\dfrac{1}{\abs{Y}}\sum \limits_{k=1,2}\iiint_{\Omega_{T}\times \Gamma^{k}}\mathrm{I}_{b,ion}(w^{k}) \psi^{k} \ dxd\sigma_ydt
\\& \quad +\dfrac{1}{2\abs{Y}}\iint_{\Omega_{T}\times \Gamma^{1,2}}\I_{gap}(s) \Psi \ dxd\sigma_ydt
\\&=\dfrac{1}{\abs{Y}}\sum \limits_{k=1,2}\iiint_{\Omega_{T}\times \Gamma^{k}} \I_{app}^{k}\psi^{k} \ dxd\sigma_ydt,
\end{aligned}
\label{Fv_ie_hom}
\end{equation}
Similarly, we can prove also that the limit of \eqref{Fv_d_unf_gap} for $k=1,2$ as $\e$ tends to zero, is given by:
\begin{equation}
\dfrac{1}{\abs{Y}}\iiint_{\Omega_{T}\times \Gamma^{k}} \pt_t w^{k} e^{k} \ dxd\sigma_ydt-\dfrac{1}{\abs{Y}}\iiint_{\Omega_{T}\times \Gamma^{k}}  H(v^{k}, w^{k}) e^{k} \ dxd\sigma_ydt =0.
\label{Fv_d_hom}
\end{equation}

\begin{rem}
Since the linear term $H$ is not varying at the micro scale and since $v^{k}$ does not depend on $y$, it can be proven, using Assumption \eqref{A_H_Ib_a}, that the solution $w^{k}$ of 
\begin{equation*}
\begin{cases}
\pt_t w^{k}=H(v^{k}, w^{k}) &\text{ in } \Omega_{T}\times \Gamma^{k},
\\  w^{k}(0,x)=w_{0}^{k}(x) &\text{ on } \Omega,
\end{cases}
\end{equation*}
 is unique for all  $y\in \Gamma^{k}$ for $k=1,2$ hence it is independent of the variable $y$.
\end{rem}

\subsection{Derivation of the macroscopic tridomain model}\label{macro_gap}
The convergence results of the previous part allow us to pass to the limit in the microscopic equations \eqref{Fv_ik_ini_gap}-\eqref{Fv_d_ini_gap}  and to obtain the homogenized model formulated in Theorem \ref{thm_macro_gap}.

  To this end, we choose a special form of test functions to capture the  microscopic informations at each structural level. Then, we consider that the test functions have the following form:
\begin{equation}
\begin{cases}
 \varphi_{e,\e}=\phi_{e}(t,x)+\e\theta_{e}(t,x)\Theta_{e,\e}(x),
\\ \varphi_{i,\e}^{k}=\phi_{i}^{k}(t,x)+\e\theta_{i}^{k}(t,x)\Theta_{i,\e}^{k}(x),
\end{cases}
\end{equation}
with functions $\Theta_{e,\e}$ and $\Theta_{i,\e}^{k}$ for $k=1,2$ defined by: 
$$\Theta_{e,\e}(x)=\Theta_{e}\left( \dfrac{x}{\e}\right)\ \text{ and } \Theta_{i,\e}^{k}(x)=\Theta_{i}^{k}\left( \dfrac{x}{\e}\right), \ \text{ for } k=1,2,$$
where $\phi_{e}, \phi_{i}^{k}, \theta_{e}$ and $\theta_{i}^{k}$ are in $D(\Omega_T),$ $\Theta_{e}$ in $H_{\#}^1(Y_{e})$ and $\Theta_{i}^{k}$ in $H_{\#}^1(Y_{i}^{k})$ for $k=1,2.$
 Then, we have:
 \begin{equation*}
 \begin{cases}
 \nabla \varphi_{e,\e}=\nabla_x \phi_{e}+\e\nabla_x\theta_{e} \Theta_{e,\e}+\theta_{e} \nabla_y\Theta_{e,\e},
 \\ \nabla \varphi_{i,\e}^{k}=\nabla_x \phi_{i}^{k}+\e\nabla_x\theta_{i}^{k} \Theta_{i,\e}^{k}+\theta_{i}^{k} \nabla_y\Theta_{i,\e}^{k}.
 \end{cases}
 \end{equation*}
 Due to the regularity of test functions and using property \eqref{P_uo5_gap} of Proposition \ref{prop_uo_gap}, there holds for $k=1,2$, when $\e \rightarrow 0:$ 
 \begin{align*}
&\T_{\e}^{i,k}(\varphi_{i,\e}^{k})\rightarrow \phi_{i}^{k} \text{ strongly in } L^{2}\left(\Omega_T \times Y_{i}^{k}\right),
\\& \T_{\e}^{i,k}(\theta_{i}^{k}\Theta_{i,\e}^{k})\rightarrow \theta_{i}^{k}(t,x)\Theta_{i}^{k}(y)\text{ strongly in } L^{2}\left(\Omega_T \times Y_{i}^{k}\right),
\\& \T_{\e}^{i,k}\left( \nabla \varphi_{i,\e}^{k}\right)\rightarrow \nabla_x \phi_{i}^{k}+\theta_{i}^{k} \nabla_y\Theta_{i,\e}^{k} \text{ strongly in } L^{2}\left(\Omega_T \times Y_{i}^{k}\right),
\\&\T_{\e}^{e}(\varphi_{e,\e})\rightarrow \phi_{e} \text{ strongly in } L^{2}\left(\Omega_T \times Y_{e}\right),
\\& \T_{\e}^{e}(\theta_{e}\Theta_{e,\e})\rightarrow \theta_{e}(t,x)\Theta_{e}(y)\text{ strongly in } L^{2}\left(\Omega_T \times Y_{e}\right),
\\& \T_{\e}^{e}\left( \nabla \varphi_{e,\e}\right)\rightarrow \nabla_x \phi_{e}+\theta_{e} \nabla_y\Theta_{e,\e} \text{ strongly in } L^{2}\left(\Omega_T \times Y_{e}\right).
\end{align*}
Since $\psi_{\e}^{k}:=\left(\varphi_{i,\e}^{k}-\varphi_{e,\e} \right)\vert_{\Gamma_{\e,T}^{k}} $ for $k=1,2$ and  $\Psi_{\e}:=\left(\varphi_{i,\e}^{1}-\varphi_{i,\e}^{2} \right)\vert_{\Gamma_{\e,T}^{1,2}},$ then it holds also: 
 \begin{align*}
 & \T_{\e}^{b,k}(\psi_{\e}^{k}) \rightarrow \psi^{k} \text{ strongly in } L^{2}(\Omega_T\times \Gamma^{k}),
\\& \T_{\e}^{b,1,2}(\Psi_{\e}) \rightarrow \Psi \text{ strongly in } L^{2}(\Omega_T\times \Gamma^{1,2}),
\end{align*}
where $\psi^{k}:=\left(\phi_{i}^{k}-\phi_{e} \right)\vert_{\Omega_T\times \Gamma^{k}}$ for $k=1,2$ and  $\Psi:=\left(\phi_{i}^{1}-\phi_{i}^{2} \right)\vert_{\Omega_T\times\Gamma^{1,2}}.$
  
   Collecting all the convergence results of $J_1,\dots,J_8$ obtained in Section \ref{unf_gap}, we deduce the following limiting problem:
   \begin{equation}
\begin{aligned}
&\sum \limits_{k=1,2} \dfrac{\abs{\Gamma^{k}}}{\abs{Y}}\iint_{\Omega_{T}} \pt_t v^{k} \psi^{k} \ dxdt+\dfrac{\abs{\Gamma^{1,2}}}{2\abs{Y}}\iint_{\Omega_{T}} \pt_t s \Psi \ dxdt
\\& \quad +\dfrac{1}{\abs{Y}}\sum \limits_{k=1,2}\iiint_{\Omega_{T}\times Y_{i}^{k}}\mathrm{M}_{i} \left[ \nabla u_{i}^{k}+\nabla_y \widehat{u}_{i}^{k}\right] \left[\nabla_x \phi_{i}^{k}+\theta_{i}^{k} \nabla_y\Theta_{i,\e}^{k}\right] \ dxdydt
\\& \quad +\dfrac{1}{\abs{Y}}\iiint_{\Omega_{T}\times Y_{e}}\mathrm{M}_{e} \left[ \nabla u_{e}+\nabla_y \widehat{u}_{e}\right] \left[\nabla_x \phi_{e}+\theta_{e} \nabla_y\Theta_{e,\e}\right]  \ dxdydt
\\& \quad +\sum \limits_{k=1,2}\dfrac{\abs{\Gamma^{k}}}{\abs{Y}}\iint_{\Omega_{T}}\mathrm{I}_{a,ion}(v^{k}) \psi^{k} \ dxdt+\sum \limits_{k=1,2}\dfrac{\abs{\Gamma^{k}}}{\abs{Y}}\iint_{\Omega_{T}}\mathrm{I}_{b,ion}(w^{k}) \psi^{k} \ dxdt
\\& \quad +\dfrac{\abs{\Gamma^{1,2}}}{2\abs{Y}}\iint_{\Omega_{T}}\I_{gap}(s) \Psi \ dxdt
\\&=\sum \limits_{k=1,2}\dfrac{\abs{\Gamma^{k}}}{\abs{Y}}\iint_{\Omega_{T}} \I_{app}^{k}\psi^{k} \ dxdt.
\end{aligned}
\label{Fv_ie_psi_theta_gap}
\end{equation}
Similarly, we can prove also that the limit of the coupled dynamic equation for $k=1,2$ as $\e$ tends to zero, which is given by:
\begin{equation}
\dfrac{\abs{\Gamma^{k}}}{\abs{Y}}\iint_{\Omega_{T}} \pt_t w e^{k} \ dxdt-\dfrac{\abs{\Gamma^{k}}}{\abs{Y}}\iint_{\Omega_{T}}  H(v^{k}, w^{k}) e^{k} \ dxdt =0.
\label{Fv_d_psi_theta_gap}
\end{equation}

 Now, we will find first the expression of $\widehat{u}_{i}^{k}$ in terms of the homogenized solution $u_{i}^{k}$ for $k=1,2.$ Then, we derive the cell problem from the homogenized equation \eqref{Fv_ie_psi_theta_gap}. Finally, we obtain the weak formulation of the corresponding macroscopic equation.
 
   We first take $\phi_{e},$ $\theta_{e}$ and $\phi_{i}^{k}$ for $k=1,2$ are equal to zero, to get:
\begin{equation}
\dfrac{1}{\abs{Y}}\sum \limits_{k=1,2}\iint_{\Omega_{T}\times Y_{i}^{k}}\mathrm{M}_{i} \left( \nabla u_{i}^{k}+\nabla_y \widehat{u}_{i}^{k}\right) \left(\theta_{i}^{k} \nabla_y\Theta_{i,\e}^{k}\right) \ dxdydt=0.
\label{Fv_i_theta1_theta2_gap}
\end{equation} 
Since $u_{i}^{k}, k=1,2$ is independent on the microscopic variable $y$ then the formulation \eqref{Fv_i_theta1_theta2_gap} corresponds to the following microscopic problem:
\begin{equation}
 \begin{cases}
-\nabla_y\cdot\left(\mathrm{M}_i \nabla_y \widehat{u}_{i}^{k}\right) =\overset{d}{\underset{p,q=1}{\sum}}\dfrac{\pt \mathrm{m}^{pq}_{i}}{\pt y_p}\dfrac{\pt u_{i}^{k}}{\pt x_q} \ \text{in} \ Y_{i}^{k},
\\ \widehat{u}_{i}^{k} \ y\text{-periodic}, \\ \left( \mathrm{M}_{i}\nabla_y \widehat{u}_{i}^{k}+ \mathrm{M}_{i}\nabla_x u_{i}^{k}\right)  \cdot n_{i}^{k}= 0 \ \text{on} \ \Gamma^{k}, \\ \left( \mathrm{M}_{i}\nabla_y \widehat{u}_{i}^{k}+ \mathrm{M}_{i}\nabla_x u_{i}^{k}\right)  \cdot n_{i}^{k}= 0 \ \text{on} \ \Gamma^{1,2}. 
 \end{cases}
 \label{Ayyhatu_i_gap}
 \end{equation}
Hence, by the $y$-periodcity of $\mathrm{M}_i$ and the compatibility condition, it is not difficult to establish the existence of a unique periodic solution up to an additive constant of the problem \eqref{Ayyhatu_i_gap} (see \cite{BaderDev} for more details).\\
 Thus, the linearity of terms in the right-hand side of \eqref{Ayyhatu_i_gap} suggests to look for $\widehat{u}_{i}^{k}$ under the following form in terms of $u_{i}^{k}$:
\begin{equation} 
\widehat{u}_{i}^{k}(t,x,y,z)=\chi_i(y)\cdot \nabla_x u_{i}^{k}+\widehat{u}_{0,i}^{k}(t,x,y),
\label{hatu_i_gap}
\end{equation}
where $\widehat{u}_{0,i}^{k}, k=1,2$ is a constant with respect to $y$ and each element $\chi_i^q$ of $\chi_i$ satisfies the following $\e$-cell problem:
\begin{equation}
\begin{cases}
-\nabla_y \cdot\left(\mathrm{M}_i\nabla_y \chi_i^q \right)=\overset{d}{\underset{p=1}{\sum}}\dfrac{\pt \mathrm{m}^{pq}_{i}}{\pt y_p} \ \text{in} \ Y_{i}^{k},\\ \chi_i^q \ y\text{-periodic}, \\ \mathrm{M}_{i} \nabla_y \chi_i^q \cdot n_{i}^{k}= - (\mathrm{M}_{i}e_q )\cdot n_{i}^{k} \text{ on } \Gamma^{k}, \ k=1,2 
\\ \mathrm{M}_{i} \nabla_y \chi_i^q \cdot n_{i}^{k}= - (\mathrm{M}_{i}e_q )\cdot n_{i}^{k} \text{ on } \Gamma^{1,2},
 \end{cases}
 \label{Ayychiik_gap}
 \end{equation}
 for $q=1,\dots,d.$ Moreover, the compatibility  condition is imposed to guarantee the existence and uniqueness of solution $\chi_{i}^q \in H_{\#}^1(Y_{i}^{k})$ to problem \eqref{Ayychiik_gap} with $H_{\#}^1$ is given by \eqref{W_gap}. 

 Finally, inserting the form \eqref{hatu_i_gap} of $\widehat{u}_{i}^{k}$ into \eqref{Fv_ie_psi_theta_gap} and setting $\theta_{i}^{k}, \theta_{e}$ $\phi_{e}$ to zero, one obtains the weak formulation of the homogenized equation for the intracellular problem:
  \begin{equation}
\begin{aligned}
& \sum \limits_{k=1,2}\mu_{k}\iint_{\Omega_{T}} \pt_t v^{k} \phi_{i}^{k} \ dxdt+\mu_{g}\iint_{\Omega_{T}} \pt_t s \phi_{i}^{1} \ dxdt
\\&+\dfrac{1}{\abs{Y}}\sum \limits_{k=1,2}\iiint_{\Omega_{T}\times Y_{i}^{k}}\widetilde{\mathbf{M}}_i\nabla u_{i}^{k} \cdot\nabla \phi_{i}^{k} \ dxdydt
\\&+\sum \limits_{k=1,2}\mu_{k}\iint_{\Omega_{T}}\mathrm{I}_{a,ion}(v^{k}) \phi_{i}^{k} \ dxdt+\sum \limits_{k=1,2}\mu_{k}\iint_{\Omega_{T}}\mathrm{I}_{b,ion}(w^{k}) \phi_{i}^{k} \ dxdt
\\&+\mu_{g}\iint_{\Omega_{T}}\I_{gap}(s) \phi_{i}^{1} \ dxdt =\sum \limits_{k=1,2}\mu_{k}\iint_{\Omega_{T}} \I_{app}^{k}\phi_{i}^{k} \ dxdt,
\end{aligned}
\label{Fvhomik_gap}
\end{equation}
with  $\mu_{k}=\abs{\Gamma^{k}}/\abs{Y},$ $k=1,2,$ $\mu_{g}=\abs{\Gamma^{1,2}}/2\abs{Y}$ and the coefficients of the homogenized conductivity matrices $\widetilde{\mathbf{M}}_i=\left( \widetilde{\mathbf{m}}^{pq}_i\right)_{1\leq p,q \leq d}$ defined by:
\begin{equation}
\widetilde{\mathbf{m}}_{i}^{pq}:=\dfrac{1}{\abs{Y}}\overset{d}{\underset{\ell=1}{\sum}}\displaystyle\int_{Y_{i}^{k}}\left( \mathrm{m}_{i}^{pq}+\mathrm{m}^{p\ell}_{i}\dfrac{\pt \chi_i^q}{\pt y_\ell}\right) \ dy.
\label{Mt_ik_gap}
\end{equation} 
 Similarly, we can decouple the cell problem in the extracellular domain and define the homogenized matrix $\widetilde{\mathbf{M}}_e.$ This completes the proof of Theorem \ref{thm_macro_gap} using periodic unfolding method.
 \begin{rem}\label{Mt_pos_ell} $ $
 \begin{enumerate}
 \item[$1.$] Since the conductivity matrices $\mathrm{M}_j$ for $j=i,e$ are symmetric then the homogenized conductivity matrices  $\widetilde{\mathbf{M}}_j$ defined by \eqref{tilde_m_i_gap}-\eqref{tilde_m_e_gap} are also symmetric for $j=i,e.$
 \item[$2.$] We can rewrite the homogenized conductivity matrices $\widetilde{\mathbf{M}}_i=\left( \widetilde{\mathbf{m}}^{pq}_i\right)_{1\leq p,q \leq d}$ as follows
\begin{equation}
\widetilde{\mathbf{m}}_{i}^{pq}:=\dfrac{1}{\abs{Y}}\overset{d}{\underset{\ell,\ell'=1}{\sum}}\displaystyle\int_{Y_{i}^{k}} \mathrm{m}_{i}^{\ell\ell'}\dfrac{\pt \left(y_{q}+\chi_i^q\right)}{\pt y_{\ell'}}\dfrac{\pt \left(y_{p}+\chi_i^p\right)}{\pt y_{\ell}}\ dy.
\label{Mt_ik_gap_new1}
\end{equation}
Indeed, we recall that $\chi_i^q$ is the solution of \eqref{Ayychiik_gap}. Choosing $\chi_i^p$ as test function in \eqref{Ayychiik_gap}, one has
\begin{equation*}
\overset{d}{\underset{\ell,\ell'=1}{\sum}}\displaystyle\int_{Y_{i}^{k}} \mathrm{m}_{i}^{\ell\ell'}\dfrac{\pt \chi_i^q}{\pt y_{\ell'}}\dfrac{\pt\chi_i^p}{\pt y_{\ell}}\ dy=-\overset{d}{\underset{\ell=1}{\sum}}\displaystyle\int_{Y_{i}^{k}} \mathrm{m}_{i}^{\ell q}\dfrac{\pt\chi_i^p}{\pt y_{\ell}}\ dy=-\overset{d}{\underset{\ell,\ell'=1}{\sum}}\displaystyle\int_{Y_{i}^{k}} \mathrm{m}_{i}^{\ell\ell'}\dfrac{\pt y_{q}}{\pt y_{\ell'}}\dfrac{\pt \chi_i^p}{\pt y_{\ell}}\ dy.
\end{equation*}
Hence, one obtains
\begin{equation}
\dfrac{1}{\abs{Y}}\overset{d}{\underset{\ell,\ell'=1}{\sum}}\displaystyle\int_{Y_{i}^{k}} \mathrm{m}_{i}^{\ell\ell'}\dfrac{\pt \left(y_{q}+\chi_i^q\right)}{\pt y_{\ell'}}\dfrac{\pt \chi_i^p}{\pt y_{\ell}}\ dy=0.
\label{chi_i^q_chi_i^p_gap}
\end{equation}
On the other hand, since
\begin{align*}
&\int_{Y_{i}^{k}}\mathrm{m}_{i}^{pq} \ dy=\overset{d}{\underset{\ell,\ell'=1}{\sum}}\displaystyle\int_{Y_{i}^{k}} \mathrm{m}_{i}^{\ell\ell'}\dfrac{\pt y_{q}}{\pt y_{\ell'}}\dfrac{\pt y_{p}}{\pt y_{\ell}}\ dy,
\\& \overset{d}{\underset{\ell=1}{\sum}}\displaystyle\int_{Y_{i}^{k}}\mathrm{m}^{p\ell}_{i}\dfrac{\pt \chi_i^q}{\pt y_\ell} \ dy=\overset{d}{\underset{\ell,\ell'=1}{\sum}}\displaystyle\int_{Y_{i}^{k}} \mathrm{m}_{i}^{\ell\ell'}\dfrac{\pt \chi_i^q}{\pt y_{\ell'}}\dfrac{\pt y_{p}}{\pt y_{\ell}}\ dy,
\end{align*}
formula \eqref{Mt_ik_gap} can be written as follows:
\begin{equation}
\widetilde{\mathbf{m}}_{i}^{pq}=\dfrac{1}{\abs{Y}}\overset{d}{\underset{\ell,\ell'=1}{\sum}}\displaystyle\int_{Y_{i}^{k}} \mathrm{m}_{i}^{\ell\ell'}\dfrac{\pt \left(y_{q}+\chi_i^q\right)}{\pt y_{\ell'}}\dfrac{\pt y_{p}}{\pt y_{\ell}}\ dy, \ \forall p,q=1,\dots,d.
\label{Mt_ik_gap_new2}
\end{equation}
Summing \eqref{chi_i^q_chi_i^p_gap} from \eqref{Mt_ik_gap_new2} gives \eqref{Mt_ik_gap_new1}. Similarly, we can rewrite the other matrix $\widetilde{\mathbf{M}}_e$ in terms of the corresponding corrector function $\chi_e.$ 
 \item[$3.$] Since the conductivity matrices $\mathrm{M}_j$ for $j=i,e$ satisfy the elliptic conditions defined by \eqref{A_Mie_gap}, then the homogenized conductivity matrices $\widetilde{\mathbf{M}}_j,$ $j=i,e$ verify the following elliptic conditions: there exits $\alpha_0,\beta_0 >0$ such that
\begin{subequations}
\begin{align}
&\widetilde{\mathbf{M}}_j\lambda\cdot\lambda \geq \alpha_0\abs{\lambda}^2,
\label{A_Mtie_elli_gap}
\\& \abs{\widetilde{\mathbf{M}}_j\lambda}\leq \beta_0 \abs{\lambda}, \ \text{ for any }  \lambda\in \R^d.
\label{A_Mtie_con_gap}
\end{align}
\label{A_Mtie_gap}
\end{subequations}

Indeed, let $\lambda \in \R^d$ and $j=i.$ To prove \eqref{A_Mtie_elli_gap}, then from \eqref{Mt_ik_gap_new1} it follows that
 \begin{equation*}
 \overset{d}{\underset{p,q=1}{\sum}}\widetilde{\mathbf{m}}_{i}^{pq}\lambda_p\lambda_q=\dfrac{1}{\abs{Y}}\overset{d}{\underset{p,q=1}{\sum}}\overset{d}{\underset{\ell,\ell'=1}{\sum}}\displaystyle\int_{Y_{i}^{k}} \mathrm{m}_{i}^{\ell\ell'}\lambda_p\dfrac{\pt \left(y_{p}+\chi_i^p\right)}{\pt y_{\ell}}\lambda_q\dfrac{\pt \left(y_{q}+\chi_i^q\right)}{\pt y_{\ell'}}\ dy.
 \end{equation*}
 Setting $\zeta_i=\overset{d}{\underset{p=1}{\sum}}\lambda_p\left(y_{p}+\chi_i^p\right)$ and using the ellipticity of $\mathrm{M}_i$ defined by \eqref{A_Mie_gap}, we get
 \begin{equation}
 \overset{d}{\underset{p,q=1}{\sum}}\widetilde{\mathbf{m}}_{i}^{pq}\lambda_p\lambda_q \geq \dfrac{\alpha}{\abs{Y}}\displaystyle\int_{Y_{i}^{k}} \card{\nabla \zeta_i}^2\ dy \geq 0,  \text{ for any } \lambda \in \R^d.
 \label{Mt_ie_zeta_gap}
 \end{equation}
 Let us show that this inequality implies that
  \begin{equation*}
 \overset{d}{\underset{p,q=1}{\sum}}\widetilde{\mathbf{m}}_{i}^{pq}\lambda_p\lambda_q > 0,  \text{ for any } \lambda \in \R^d, \ \lambda\neq 0.
 \end{equation*}
 If this were not true. In view of \eqref{Mt_ie_zeta_gap}, one would have some  $\lambda\neq 0$ such that
$$\card{\nabla \zeta_i}=0. $$
This means that
$$ \zeta_i=\overset{d}{\underset{p=1}{\sum}}\lambda_p\left(y_{p}+\chi_i^p\right)=constant.$$
Thus, one has 
$$\overset{d}{\underset{p=1}{\sum}}\lambda_p y_{p}=-\overset{d}{\underset{p=1}{\sum}}\lambda_p \chi_i^p + C, $$
and this impossible since the right-hand side function is $y$-periodic by definition and $\lambda\neq 0.$ To end the proof of ellipticity, we know that the function $\overset{d}{\underset{p,q=1}{\sum}}\widetilde{\mathbf{m}}_{i}^{pq}\xi_p\xi_q$ is continuous on the unit sphere $\mathbb{S}^{d-1}$ which is a compact set of $\R^d.$ Hence, this function achieves its minimum on $\mathbb{S}^{d-1}$ and, due to the previous result, this minimum is positive. So, there exists $\alpha_0>0$ such that 
$$\overset{d}{\underset{p,q=1}{\sum}}\widetilde{\mathbf{m}}_{i}^{pq}\xi_p\xi_q \geq \alpha_0, \ \forall \xi\in \mathbb{S}^{d-1}.$$ Consequently,
$$ \overset{d}{\underset{p,q=1}{\sum}}\widetilde{\mathbf{m}}_{i}^{pq}\dfrac{\lambda_p}{\abs{\lambda}}\dfrac{\lambda_q}{\abs{\lambda}} \geq \alpha_0,  \text{ for any } \lambda \in \R^d, \ \lambda \neq 0,$$ 
since the vector $\left(\dfrac{\lambda_1}{\abs{\lambda}},\dots,\dfrac{\lambda_d}{\abs{\lambda}} \right)$ belongs to $\mathbb{S}^{d-1}.$ This ends the proof of inequality $\eqref{A_Mtie_elli_gap}$ and by the same way we obtain the second inequality.
\end{enumerate}  

 \end{rem}


\bibliographystyle{plain}
 \bibliography{Hom}

\begin{thebibliography}{10}

\bibitem{amar06}
Micol Amar, Daniele Andreucci, Paolo Bisegna, and Roberto Gianni.
\newblock On a hierarchy of models for electrical conduction in biological
  tissues.
\newblock {\em Mathematical Methods in the Applied Sciences}, 29(7):767--787,
  2006.

\bibitem{amar13}
Micol Amar, Daniele Andreucci, Paolo Bisegna, Roberto Gianni, et~al.
\newblock A hierarchy of models for the electrical conduction in biological
  tissues via two-scale convergence: The nonlinear case.
\newblock {\em Differential and Integral Equations}, 26(9/10):885--912, 2013.

\bibitem{BaderDev}
Fakhrielddine Bader, Mostafa Bendahmane, Mazen Saad, and Raafat Talhouk.
\newblock Derivation of a new macroscopic bidomain model including three scales
  for the electrical activity of cardiac tissue.
\newblock {\em Journal of Engineering Mathematics}, 131(1):1--30, 2021.

\bibitem{BaderUnf}
Fakhrielddine Bader, Mostafa Bendahmane, Mazen Saad, and Raafat Talhouk.
\newblock Three scale unfolding homogenization method applied to cardiac
  bidomain model.
\newblock {\em Acta Applicandae Mathematicae}, 176(1):1--37, 2021.

\bibitem{BaderTridPart1}
Fakhrielddine Bader, Mostafa Bendahmane, Mazen Saad, and Raafat Talhouk.
\newblock Microscopic tridomain model of electrical activity in the heart with
  dynamical gap junctions. part 1--modeling and well-posedness.
\newblock {\em Acta Applicandae Mathematicae}, 179(1):1--35, 2022.

\bibitem{bendunf19}
Mostafa Bendahmane, Fatima Mroue, Mazen Saad, and Raafat Talhouk.
\newblock Unfolding homogenization method applied to physiological and
  phenomenological bidomain models in electrocardiology.
\newblock {\em Nonlinear Analysis: Real World Applications}, 50:413--447, 2019.

\bibitem{doinaunf18}
D~Cioranescu, A~Damlamian, and G~Griso.
\newblock The periodic unfolding method, series in contemporary mathematics,
  vol. 3, 2018.

\bibitem{doinaunf12}
Doina Cioranescu, Alain Damlamian, Patrizia Donato, Georges Griso, and Rachad
  Zaki.
\newblock The periodic unfolding method in domains with holes.
\newblock {\em SIAM Journal on Mathematical Analysis}, 44(2):718--760, 2012.

\bibitem{colli12}
Piero Colli-Franzone, Luca~F Pavarino, and Simone Scacchi.
\newblock Mathematical and numerical methods for reaction-diffusion models in
  electrocardiology.
\newblock In {\em Modeling of Physiological flows}, pages 107--141. Springer,
  2012.

\bibitem{soren}
S{\"o}ren Dobbersch{\"u}tz.
\newblock Homogenization of a diffusion-reaction system with surface exchange
  and evolving hypersurface.
\newblock {\em Mathematical Methods in the Applied Sciences}, 38(3):559--579,
  2015.

\bibitem{maria}
Markus Gahn and Maria Neuss-Radu.
\newblock A characterization of relatively compact sets in lp ($\omega$, b).
\newblock {\em Stud. Univ. Babes-Bolyai Math}, 61(3):279--290, 2016.

\bibitem{mariahom}
Markus Gahn, Maria Neuss-Radu, and Peter Knabner.
\newblock Homogenization of reaction--diffusion processes in a two-component
  porous medium with nonlinear flux conditions at the interface.
\newblock {\em SIAM Journal on Applied Mathematics}, 76(5):1819--1843, 2016.

\bibitem{henri}
Craig~S Henriquez and Wenjun Ying.
\newblock The bidomain model of cardiac tissue: from microscale to macroscale.
\newblock In {\em Cardiac Bioelectric Therapy}, pages 401--421. Springer, 2009.

\bibitem{hogues}
H~Hogues, LJ~Leon, and FA~Roberge.
\newblock A model study of electric field interactions between cardiac
  myocytes.
\newblock {\em IEEE transactions on biomedical engineering}, 39(12):1232--1243,
  1992.

\bibitem{tveito19}
Karoline~Horgmo J{\ae}ger, Andrew~G Edwards, Andrew McCulloch, and Aslak
  Tveito.
\newblock Properties of cardiac conduction in a cell-based computational model.
\newblock {\em PLoS computational biology}, 15(5):e1007042, 2019.

\bibitem{lions1969}
Jacques-Louis Lions.
\newblock Quelques m{\'e}thodes de r{\'e}solution des problemes aux limites non
  lin{\'e}aires.
\newblock 1969.

\bibitem{neukra}
JC~Neu and W~Krassowska.
\newblock Homogenization of syncytial tissues.
\newblock {\em Critical reviews in biomedical engineering}, 21(2):137--199,
  1993.

\bibitem{maria07}
Maria Neuss-Radu and Willi J{\"a}ger.
\newblock Effective transmission conditions for reaction-diffusion processes in
  domains separated by an interface.
\newblock {\em SIAM Journal on Mathematical Analysis}, 39(3):687--720, 2007.

\bibitem{colli05}
Micol Pennacchio, Giuseppe Savar{\'e}, and Piero~Colli Franzone.
\newblock Multiscale modeling for the bioelectric activity of the heart.
\newblock {\em SIAM Journal on Mathematical Analysis}, 37(4):1333--1370, 2005.

\bibitem{tveito17}
Aslak Tveito, Karoline~H J{\ae}ger, Miroslav Kuchta, Kent-Andre Mardal, and
  Marie~E Rognes.
\newblock A cell-based framework for numerical modeling of electrical
  conduction in cardiac tissue.
\newblock {\em Frontiers in Physics}, 5:48, 2017.

\end{thebibliography}
\appendix

\end{document}